\documentclass{article}

\usepackage{arxiv}

\usepackage{amsmath,amssymb,amsthm}
\usepackage{cite,color}
\usepackage{bm}
\usepackage{tikz}
\usetikzlibrary{arrows.meta}
\usepackage{orcidlink}

\newtheorem{theorem}{Theorem}
\newtheorem{lemma}[theorem]{Lemma}
\newtheorem{proposition}[theorem]{Proposition}
\newtheorem{corollary}[theorem]{Corollary}

\newtheorem{remark}[theorem]{Remark}
\newtheorem{problem}[theorem]{Problem}
\newtheorem{example}[theorem]{Example}

\DeclareMathOperator{\diag}{diag}
\DeclareMathOperator{\tr}{tr}

\DeclareMathOperator{\Skew}{Skew}

\newcommand{\blambda}{\boldsymbol{\lambda}}

\begin{document}
\title{Isospectral Steering}
\author{Ralph Sabbagh\orcidlink{https://orcid.org/0000-0002-2020-5420}, {\em Member, IEEE},  and Tryphon T.\ Georgiou\orcidlink{https://orcid.org/0000-0003-0012-5447}, {\em Life Fellow, IEEE}%
\thanks{The authors are with the Department of Mechanical and Aerospace Engineering, University of California, Irvine, Irvine, CA, USA, emails: rsabbag1@uci.edu, tryphon@uci.edu.}
\thanks{Supported by the National Science Foundation under grant ECCS-2347357, the Air Force of Scientific Research under FA9550-24-1-0278, and the Army Research Office under W911NF-22-1-0292.}
}

\date{}

\maketitle

\begin{abstract}
We study the controllability of the differential Lyapunov equation
under isospectral rotation of a linear gradient field. Specifically, control is
effected by a symmetric time-varying gain-matrix  
constrained to have fixed eigenvalues; that is, by exclusively modulating the eigen-vectors of the state matrix 
and not its eigenvalues.  Motivation for this problem stems from a certain type of control objectives (minimum shear/attention) aimed to reduce anisotropic deformation
when ensembles are steered by a common law--optimality necessitates constancy of eigenvalues. In the paper we introduce and motivate this type of isospectral steering, and describe the reachable set of covariances for any specified terminal time and eigenvalues of the gain matrix. The theory we develop is intimately linked to multilinear algebra as well as to positive linear algebra and the Birkoff-von Neumann theorem for doubly stochastic matrices.
\end{abstract}

\medskip
\noindent\textit{Index Terms}---Isospectral Controllability, Lyapunov Equation, Majorization.

\section{Introduction}\label{sec:intro}

We study the controllability of the differential Lyapunov equation and of the corresponding state-transition dynamics with isospectral control laws.
In this, it is the time-varying control gain that remains isospectral, and not the state covariances nor the state-transition matrices.
Our interest in this niche of the covariance control literature stems from the recent work \cite{Sabbagh26} where it was shown that for a class of performance objectives, a hidden Lax lift of the Hamilton-Jacobi dynamics, descends onto the optimal gain, enforcing that it remains isospectral.

Specifically, we consider an ensemble of dynamical systems or particles situated at locations $\mathbf x_t^{(k)}\in \mathbb R^n$, for $k\in\{1,\ldots,N\}$, that follow the gradient of a time-varying quadratic potential $\phi(t,\mathbf x)=\frac12 \mathbf x^\top A_t \mathbf x$; the scheduling of $\{A_t\mid t\in[0,T ]\}$ constitutes the control law. Our task is to steer the (sample) covariance
\[
\Sigma_t=\frac{1}{N}\sum_{k=1}^N \mathbf x_t^{(k)}{\mathbf x_t^{(k)}}^\top
\]
 of a zero-mean point cloud that obeys the Lyapunov equation
\begin{equation}\label{eq:lyap}
\dot \Sigma_t=A_t\Sigma_t+\Sigma_tA_t
\end{equation}
between specified end-point values $\Sigma_0$, $\Sigma_{T}\in \mathbb S^n_{++}$, the space of $n\times n$, symmetric and positive definite matrices. 
It was pointed out in \cite{Sabbagh26} that for certain control objectives that penalize the spread of the eigenvalues of $A_t$, the Euler–Lagrange/Pontryagin optimality conditions dictate that $A_t$ remains isospectral.
Examples of such objectives include
the time integral of the difference between the maximal and minimal eigenvalues of $A_t$ \cite{Sabbagh26}, and the time integral of $\tr(A_t^2)$ that quantifies {\em shear} deformation, equivalently {\em Brockett's attention} \cite{Brockett1997}, \cite{sabbagh2025minimizing}, that we further elaborate below and analyze in the body of the paper.

The rationale behind the concept of Brockett's attention \cite{Brockett1997}, i.e., keeping the gradient $\nabla (A\mathbf x)=A^\top$ small, stems from the fact that the gradient of the vector field quantifies the sensitivity of the control action $A\mathbf x$ on the system's location and, thereby, intuitively, the ``attention'' that the controller has to pay on where exactly the system resides so as to generate the required action. On the flip side, $A_t$ is the traceless symmetric part of the strain tensor (which, here, is symmetric already), and thereby quantifies the rate of shape distortion. Specifically, it measures how a flow causes nearby material elements to be stretched in some directions and compressed in others, without the isotropic part of uniform expansion or contraction (that is encoded in the trace). In other words, large shear means that neighboring systems separate at substantial rates depending on directions.Both of these interpretations are relevant in modern-day applications where an ensemble of, e.g., satellite constellations or multi-agent drone swarms, are guided as a collective to reposition into new formations~\cite{Oh2015survey,Beard2001coordination,Scharf2004survey}.

Other paradigms where control of (possibly non-isotropic) shear may be desirable include soft robots, artificial muscles, and dielectric elastomers~\cite{Rus2015soft,Pelrine2000high} that act as continuous spatial ensembles rather than discrete rigid joints, driven by applied fields (pneumatic, electrical, or magnetic) to achieve a desired shape so as to, for instance, squeeze through a narrow passage. Also in biomedicine, researchers use acoustic fields to steer ensembles of living cells and nanoparticles into specific distributions~\cite{Ozcelik2018acoustic,Ding2012onchip}. In all such applications, fluctuating shear forces may have an adverse effect, ripping biological cells apart~\cite{Leverett1972red} and inducing mechanical fatigue in soft materials~\cite{Mars2004factors}. Thus, in general, minimizing shear in distributed control may have several enabling consequences.

After recognizing the control relevance of minimizing shear, it is important to note that already in the context of steering between specified ellipsoidal formations the optimization problem is not convex.  However, as we will see, the solution to this optimization problem possesses the rather remarkable property that the control remains isospectral along optimal paths. This property suggests turning the question around, and instead of numerically seeking a control protocol $\{A_t\mid t\in[0,T]\}$ that may not even be unique, seek to first characterize the set of  covariances $\Sigma_{T}$ that are reachable from a starting $\Sigma_0$ by employing an isospectral control protocol with specified eigenvalues and over a specified time interval. 

Thus, in the present paper we address this precise question, of what states can be reached using a fixed ``spectral signature''? More precisely, we study the reachable set of \eqref{eq:lyap} over the time interval $[0,T]$ when the time-varying generator $A_t$ is a symmetric matrix constrained to having a prescribed spectrum. Besides minimizing attention/shear, isospectrality of the control law arises with other cost functionals that penalize the spread of eigenvalues of the control gain. Moreover, studying isospectral steering appears relevant in quantum control in cases where a control Hamiltonian has prescribed fixed energy levels and the control consists of rotating the effective couplings in a chosen basis, e.g., see \cite[Example, page 98]{zanardi1999holonomic}, \cite{karimipour2005exact}.

The present paper is organized as follows.
Section~\ref{sec:problem} defines the isospectral steering problem.
Section~\ref{sec:shear} highlights the origin of the isospectrality of the control law in the context of minimizing shear, and shows that this property persists for a broad class of cost functionals.
Section~\ref{isospcon} presents necessary and sufficient conditions for isospectral controllability.
Section~\ref{sec:background} develops the requisite background on exterior powers and compound matrices, and collects the technical lemmas needed for the proof of the main result.
Section~\ref{sec:proof-main} gives the proof of Theorem~\ref{maintheorem}.
Section~\ref{sec:time-reversal} exploits time-reversal symmetry to derive additional necessary and sufficient conditions, and establishes finite-time controllability for any nontrivial traceless spectrum.
Section~\ref{sec:numerical} illustrates the constructive nature of the controllability proof with a three-dimensional numerical example.
Section~\ref{sec:conclusion} provides concluding remarks and highlights open questions.

\subsection*{Notation}

As usual, $GL(n,\mathbb R)$ denotes 
the group of $n\times n$ invertible matrices and 
$GL_+(n,\mathbb R)$ those with positive
determinant. Then, 
 $O(n)$ denotes the group of $n\times n$ orthogonal matrices, 
$SO(n)\subset O(n)$ those with determinant $+1$, and $SL(n,\mathbb R)\subset GL_+(n,\mathbb R)$ again those of determinant $1$.
Then, $\mathbb S^n$ denotes the $n\times n$ real symmetric matrices while $\mathbb S^n_0$ denotes those with trace $0$, $\mathbb S^n_{++}$ denotes the positive definite ones, and $\Skew(n)$ denotes the skew-symmetric matrices of size $n\times n$.
For $M\in \mathbb R^{k\times \ell}$,
$\sigma_1(M)\ge\cdots\ge\sigma_n(M)>0$ denote the singular values in decreasing order,
$\|M\|=\sigma_1(M)$ the spectral norm, and 
$\|M\|_F = (\tr M^\top M)^{1/2}$ the Frobenius norm, with $\tr$ being the trace. The commutator of two matrices $A,B$ is denoted by
$[A,B]:=AB-BA$.
Finally,
$\mathbf e_1,\ldots,\mathbf e_n$ the standard basis of $\mathbb R^n$.

\section{The Problem of isospectral controllability}\label{sec:problem}
The main theme of this paper is to study the controllability of the differential Lyapunov equation \eqref{eq:lyap}, from an initial condition $\Sigma_0>0$ by way of a time-varying control gain-matrix that is constrained to be isospectral, i.e., via
\begin{align}\label{eq:control}
    A_t = U_tDU_t^\top,~~U_t\in O(n),
\end{align}
where $t\mapsto U_t$ is measurable and $D$ diagonal, with fixed eigenvalues
$\lambda_1(D)\geq\lambda_2(D)\geq\cdots \geq\lambda_n(D)$.
Evidently, the spectrum of $A_t\in\mathbb S^n$ coincides with that of $D$, and remains constant and prespecified for $t\in[0,T]$. Thus, we address the following.

\begin{problem}\label{prob1}
    Determine the space of achievable covariances at the end of the time interval $[0,T]$, i.e., the reachable set 
\begin{align*}
    \mathcal{R}({\Sigma_0,D,T }):=\{\Sigma_{T} \in \mathbb S^n_{++}\mid \Sigma_0, \mbox{ and where (\ref{eq:lyap}-\ref{eq:control}) hold}\},
\end{align*}
over choices of the control parameter $U_t\in O(n)$.
\end{problem}

\begin{remark}
    For the most part, for simplicity and without loss of generality, we assume that 
\(\det(\Sigma_0)=\det(\Sigma_{T})\)
and that $A_t$ is traceless, i.e., in $\mathbb S^n_0$. In fact, a non-zero trace of $A_t$ induces isotropic scaling of the reachable set in that,
 \begin{align}\nonumber
       \mathcal{R}(\Sigma_0,{D,T })&=e^{-2T \alpha}\mathcal{R}(\Sigma_0,{(D+\alpha I_n),T }),~~\alpha\in\mathbb{R}.
 \end{align}
\end{remark}
\vspace*{.05in}

\begin{remark}
    The choice
    \[
    A_t = A_{\rm OT}:= \frac{1}{T} \log( \Sigma_0^{-1/2}(\Sigma_0^{1/2}\Sigma_1\Sigma_0^{1/2})^{1/2}\Sigma_0^{-1/2})
    \]
    has the desired effect to steer $\Sigma_0$ to $\Sigma_T$, since
    \[
 \Sigma_T=e^{A_{\rm OT}T}\Sigma_0e^{A_{\rm OT}T}.
    \]
    This is the unique symmetric matrix that does so and is constant in time. Thus, if the prescribed spectrum differs from the spectrum of $A_{\rm OT}$, isospectral steering may still be possible, but in this case the control protocol will have to be time-varying; the eigenvectors will need to rotate accordingly.
\end{remark}

\section{Motivation: Isospectrality of shear-minimizing control}\label{sec:shear}

We begin by considering the control law \(\{A_t\}_{t\in[0,t_{\mathrm{fin}}]}\) that minimizes shear/Brockett's attention (equation \eqref{eq:integralcost} below). The development here is provided as a way of example to highlight that a uniform-in-time penalty on the spread of the eigenvalues of $A_t$ often leads to dictating that $A_t$ remains isospectral. This observation was first pointed out in \cite{Sabbagh26}, where the authors dealt with a functional that quantifies the spread
\[
\lambda_{\mathrm{max}}(A_t)-\lambda_{\mathrm{min}}(A_t)
\]
of the eigenvalues of $A_t$,
deducing similarly that $A_t$ remains isospectral. As we will explain below, there is a large class of functionals that share similar properties. Thus, we begin by considering the following problem.
\begin{problem}\label{prob2}Determine \(\{A_t\}_{t\in[0,T]}\)
    with $A_t\in\mathbb S_0^n$, i.e., symmetric and traceless, to minimize
\begin{equation}\label{eq:integralcost}
J(A_\cdot)=\int_0^{T}\tr(A_t^2)\,dt
\end{equation}
subject to \eqref{eq:lyap}, and the terminal conditions
$
\Sigma_0,\Sigma_{T}\in\mathbb S_{++}^n$,
with \(\det(\Sigma_0)=\det(\Sigma_T)\).
\end{problem}

We now derive the stationary conditions on \((\Sigma_t,A_t)\) and show that these require \(A_t\) to remain isospectral on \([0,T]\).
With \(\Lambda_t\in\mathbb S^n\) a Lagrange multiplier (co-state), the augmented functional is
\[
\mathcal L
=
\int_0^{T}
\Big(
\tr(A_t^2)
+
\tr\big(\Lambda_t(\dot\Sigma_t-A_t\Sigma_t-\Sigma_tA_t)\big)
\Big)\,dt.
\]
Setting the variation with respect $\Lambda_t$, $\Sigma_t$, and $A_t$ equal to zero we obtain the Euler--Lagrange/Pontryagin conditions
\begin{subequations}
   \begin{equation}\label{eq:first}
       \dot\Sigma_t=A_t\Sigma_t+\Sigma_tA_t,
\qquad
\dot\Lambda_t=-\Lambda_tA_t-A_t\Lambda_t,
   \end{equation}
and
\begin{equation}\label{eq:second}
2A_t-(\Sigma_t\Lambda_t+\Lambda_t\Sigma_t)
+\frac{2}{n}\tr(\Lambda_t\Sigma_t)I=0.
\end{equation}
\end{subequations}
Define $L_t:=\Lambda_t\Sigma_t$, and note that the symmetric part of $L_t$ is
\begin{equation}\label{eq:sym}
\frac12(L_t+L_t^\top)=A_t+\frac1n\tr(L_t)I,
\end{equation}
and that, from \eqref{eq:first},
\begin{align*}
\dot L_t
&=
\dot\Lambda_t\Sigma_t+\Lambda_t\dot\Sigma_t
=
L_tA_t-A_tL_t\\
&=
[L_t,A_t]=[\Omega_t,A_t],
\end{align*}
where $\Omega_t:=\frac12(L_t-L_t^\top)$ is the skew-symmetric part (since the symmetric part of $L_t$ in \eqref{eq:sym} already commutes with $A_t$).
Since $\dot L_t=[L_t,A_t]$, $L_t$ remains isospectral\footnote{To see this, consider the fundamental matrix $\Phi_t$ satisfying $\dot \Phi_t=\Phi_tA_t$, with $\Phi_0=I$. It is immediate that $L_t = \Phi_t^{-1}L_0\Phi_t$, and hence, $L_t$ remains similar to $L_0$ throughout $[0,T]$.},
and hence, $\tr(L_t)$ remains constant throughout $[0,T]$. 
Moreover,
$[\Omega_t,A_t]$ is symmetric, because $A_t$ is symmetric and $\Omega_t$ antisymmetric, and therefore $\dot L_t$ is symmetric and the skew-symmetric part  \(\Omega_t\equiv\Omega\) remains constant. So finally,
\[
\dot L_t=\dot A_t+\underbrace{\frac1n\tr(\dot L_t)I}_{=0}=[\Omega,A_t],
\]
from which we deduce that the (traceless) $A_t$ satisfies
\begin{equation}\label{eq:Aiso}
\dot A_t=[\Omega,A_t].
\end{equation}
Condition \eqref{eq:Aiso} readily establishes the following proposition.

\begin{proposition}
    The solution to Problem \ref{prob2} is isospectral, namely, the eigenvalues of $A_t$ remain constant for $t\in[0,T]$.
\end{proposition}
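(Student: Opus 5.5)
The plan is to integrate the Lax-type equation \eqref{eq:Aiso} explicitly. The stationarity conditions \eqref{eq:first}--\eqref{eq:second} are taken as given. By the derivation above they imply that the skew-symmetric part $\Omega$ of $L_t=\Lambda_t\Sigma_t$ is constant and that $\dot A_t=[\Omega,A_t]$ holds on $[0,T]$.

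Since $\Omega$ is constant and skew-symmetric, set $U_t:=e^{\Omega t}$. This is orthogonal; indeed $U_t\in SO(n)$, because $U_t^\top=e^{-\Omega t}=U_t^{-1}$. Define $B_t:=U_t^\top A_tU_t$ and differentiate:
\[
\dot B_t=U_t^\top\big(-\Omega A_t+\dot A_t+A_t\Omega\big)U_t=U_t^\top\big(\dot A_t-[\Omega,A_t]\big)U_t=0 .
\]
Hence $B_t\equiv A_0$, so $A_t=e^{\Omega t}A_0e^{-\Omega t}$. Since $A_t$ is orthogonally similar to $A_0$ for every $t$, its eigenvalues do not depend on $t$. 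This also puts the optimal control in the form \eqref{eq:control}: take $D$ to be the diagonalization of $A_0$ and $U_t=e^{\Omega t}V_0$, where $V_0$ is orthogonal with $A_0=V_0DV_0^\top$.

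Two points need checking. First, the uniqueness argument behind $B_t\equiv A_0$ requires $A_t$ to be absolutely continuous, so that differentiating is legitimate. This follows from \eqref{eq:second}: $A_t$ is an affine function of $\Sigma_t\Lambda_t+\Lambda_t\Sigma_t$, and $\Sigma_t,\Lambda_t$ solve linear ODEs, so $A_t$ is regular enough. Second, the derivation of $\dot\Omega=0$ uses that $[\Omega_t,A_t]$ is symmetric. This holds because $([\Omega,A])^\top=[A^\top,\Omega^\top]=[A,-\Omega]=[\Omega,A]$, and it is already established in the text.

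The main caveat is interpretive: the proposition asserts isospectrality for a minimizer, so one must assume a minimizer exists and satisfies the first-order conditions. That is the standard Pontryagin setting, and the constraint $\Sigma_t\succ0$ is open and not active. Equivalently, an alternative route avoids the explicit conjugation altogether: show $\frac{d}{dt}\tr(A_t^k)=k\,\tr(A_t^{k-1}[\Omega,A_t])=0$ by cyclicity of the trace. Then all power sums of the eigenvalues are constant, and by Newton's identities so is the characteristic polynomial, which gives the same conclusion.
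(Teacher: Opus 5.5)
Your proof is correct and follows the paper's route: the paper derives \eqref{eq:Aiso}, $\dot A_t=[\Omega,A_t]$ with constant skew-symmetric $\Omega$, and says only that this ``readily'' gives isospectrality; you make that step explicit via $A_t=e^{\Omega t}A_0e^{-\Omega t}$. Your alternative trace-power argument is the one the paper itself uses for Proposition~\ref{prop:6}, and your remarks on absolute continuity and on assuming a minimizer that satisfies the first-order conditions are sensible details the paper leaves implicit.
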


The isospectral property persists for other cost functionals, e.g., functionals of the form
\begin{equation}\label{eq:ff}
J_f(A_\cdot):=\int_0^{T} \tr(f(A_t))dt,
\end{equation}
if these were to replace the quadratic expression in \eqref{eq:integralcost}. To see this,
note that in the above analysis, \eqref{eq:second} is then replaced by\footnote{Here, we assume that $f$ is a scalar function, $f^\prime$ denotes the derivative, and that $f,f^\prime$ are defined on the spectrum of $A_t$, with functions of  $A_t$ computed by spectral calculus. One may however consider more general scalar functions of $A_t$ that are not of the form $\tr(f(A_t))$, with often similar results.}
\begin{equation}\label{eq:secondprime}
f^\prime(A_t)-(\Sigma_t\Lambda_t+\Lambda_t\Sigma_t)
+\frac{2}{n}\tr(\Lambda_t\Sigma_t)I=0. \tag{\ref{eq:second}'}
\end{equation}
Then, the symmetric part of $L_t$ is
\[
\frac12(L_t+L_t^\top)=f^\prime(A_t)+\frac{1}{n}\tr(L_t)I,
\]
and in precisely the same manner, $L_t$ remains isospectral. It follows that
\[
\frac{d}{dt}f^\prime(A_t)=[\Omega,A_t]
\]
as before, and if we define $M_t:=f^\prime(A_t)$, then $M_t$ commutes with $A_t$; it easily follows that
\[
\frac{d}{dt}\tr(M_t^k)=0, \mbox{ for }k\in\{1,2,\ldots\}.
\]
Thus, $M_t=f^\prime(A_t)$ is isospectral, and provided $f^\prime(\cdot)$ is injective on the specrum of $A_t$, it follows that $A_t$ remains isospectral as well. We summarize as follows.

\begin{proposition}\label{prop:6}
 The solution to the variant of Problem \ref{prob2}, where \eqref{eq:integralcost} is replaced by \eqref{eq:ff} with $f^\prime$ injective\footnote{The condition that $f^\prime$ is injective on the spectrum of $A_t$ can be guaranteed from the outset, if it is assumed strictly monotonic.} on the spectrum of $A_t$, is isospectral.
\end{proposition}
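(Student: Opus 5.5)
The plan is to redo the Euler--Lagrange/Pontryagin computation that led to \eqref{eq:Aiso}, with $\tr(A_t^2)$ replaced by $\tr(f(A_t))$.

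\emph{Stationarity in $A$.} Adjoin the co-state $\Lambda_t\in\mathbb S^n$ to \eqref{eq:lyap} exactly as before. The variations with respect to $\Lambda_t$ and $\Sigma_t$ are unchanged and give \eqref{eq:first}. For the variation in $A_t$ over traceless symmetric matrices, I will use the standard identity $\frac{d}{d\epsilon}\tr f(A+\epsilon H)|_{\epsilon=0}=\tr(f'(A)H)$. This follows from a spectral expansion, or from a polynomial approximation together with cyclicity of the trace. Projecting onto $\mathbb S^n_0$, the Lagrange multiplier for the trace constraint is fixed by taking traces, and this yields \eqref{eq:secondprime}, up to the precise scalar multiple of $I$; only the fact that it is a multiple of $I$ matters below.

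\emph{Dynamics of $L_t$.} Set $L_t=\Lambda_t\Sigma_t$. From \eqref{eq:first}, $\dot L_t=[L_t,A_t]$ exactly as before. By the footnoted argument, $L_t$ is similar to $L_0$, so $\tr L_t$ is constant.

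\emph{Constancy of $\Omega_t$.} The symmetric part of $L_t$ is $f'(A_t)+cI$ with $c$ constant. This commutes with $A_t$, since $f'(A_t)$ is a spectral function of $A_t$. Hence $\dot L_t=[\Omega_t,A_t]$, where $\Omega_t$ is the skew part of $L_t$. A commutator of a skew matrix with a symmetric matrix is symmetric, so $\dot\Omega_t=0$ and $\Omega_t\equiv\Omega$. Therefore $M_t:=f'(A_t)$ satisfies $\dot M_t=[\Omega,A_t]$.

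\emph{Isospectrality of $M_t$.} Using $[M_t,A_t]=0$ and cyclicity of the trace,
\[
\frac{d}{dt}\tr(M_t^k)=k\,\tr\big(M_t^{k-1}[\Omega,A_t]\big)=k\,\tr\big(\Omega[A_t,M_t^{k-1}]\big)=0 .
\]
Thus all power sums of the eigenvalues of $M_t$ are constant, and by Newton's identities its spectrum is constant.

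\emph{Transfer to $A_t$ (the main obstacle).} The eigenvalues of $M_t$ are $f'(\lambda_i(A_t))$, and this step needs two things.
\begin{enumerate}
\item \textbf{Regularity.} $t\mapsto A_t$ must be continuous, so the eigenvalues can be tracked continuously. On an extremal, $M_t$ is absolutely continuous because $\dot M_t$ is bounded. Local invertibility of $f'$ on the spectrum then gives continuity of $A_t=(f')^{-1}(M_t)$, defined by spectral calculus.
\item \textbf{Injectivity.} Since $f'$ is injective on the spectrum of $A_t$ (automatic if $f'$ is strictly monotone), the multiset $\{\lambda_i(A_t)\}$ is the preimage of the constant multiset $\{\mu_i\}$. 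A continuous path then cannot hop between preimages, which are finitely many points.
\end{enumerate}
Hence the spectrum of $A_t$ is constant on $[0,T]$, which proves the claim.

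In the strictly monotone case a cleaner route avoids the continuity argument. The map $(f')^{-1}$ is globally defined, so $A_t=(f')^{-1}(M_t)$ is a fixed spectral function of an isospectral matrix. Hence $A_t$ is isospectral directly, with eigenvalues $(f')^{-1}(\mu_i)$.
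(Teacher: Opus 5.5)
Your outline follows the paper's argument step for step. However, the step ``the symmetric part of $L_t$ is $f'(A_t)+cI$ with $c$ constant'' is not justified, and the power-sum computation depends on exactly that constancy. You are right to distrust \eqref{eq:secondprime} as printed: its trace forces $\tr f'(A_t)=0$. The variation over $\mathbb S^n_0$ only gives $f'(A_t)-(\Sigma_t\Lambda_t+\Lambda_t\Sigma_t)\in\mathbb R I$, that is,
\[
\tfrac12(L_t+L_t^\top)=\tfrac12 f'(A_t)+c_tI,\qquad c_t=\tfrac1n\tr(L_t)-\tfrac1{2n}\tr f'(A_t).
\]
Here $\tr(L_t)$ is constant, but $\tr f'(A_t)$ is not known to be, so ``only the fact that it is a multiple of $I$ matters'' is false. (In the paper, constancy comes from reading the scalar $\tfrac1n\tr(L_t)$ off \eqref{eq:secondprime} as printed.)

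What your argument does prove is that $N_t:=\tfrac12(L_t+L_t^\top)$ commutes with $A_t$ and satisfies $\dot N_t=[\Omega,A_t]$, so $N_t$ is isospectral. But $M_t=2N_t-2c_tI$, so the spectrum of $f'(A_t)$ is only a fixed multiset $\{\mu_i\}$ (the spectrum of $2N_t$) translated by the unknown scalar $-2c_t$. In the quadratic case $\tr f'(A_t)=2\tr A_t=0$, which is why the issue is invisible there.

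The missing step is to pin the translation using the trace constraint. If $f'$ is strictly monotone with inverse $g$, then $\lambda_i(A_t)=g(\mu_i-2c_t)$. Since $s\mapsto\sum_i g(\mu_i+s)$ is strictly monotone, $\tr A_t=0$ determines $c_t$ from the constant $\{\mu_i\}$. Hence $c_t$ is constant, and with it the spectra of $M_t$ and $A_t$. With this step inserted, your ``cleaner route'' for strictly monotone $f'$ is a complete proof.

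This is not a technicality, and for $f'$ merely injective on the spectrum it cannot be repaired. For $n=2$, take $f'(x)=x^2+\phi(x)$ with $\phi$ smooth, odd, and $\phi\equiv\gamma\neq0$ on $[a_0,a_1]$, where $0<a_0<a_1$. For $a\in[a_0,a_1]$, $f'$ is injective and locally invertible on $\{a,-a\}$. Moreover, the traceless part of $f'(\diag(a,-a))$ is $\diag(\gamma,-\gamma)$, independently of $a$.

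Take diagonal $\Sigma_0>0$ and $\Lambda_0$ with $L_0=\diag(p_1,p_2)$ and $p_1-p_2=\gamma$. Then for every continuous $a_t\in[a_0,a_1]$, the control $A_t=\diag(a_t,-a_t)$ satisfies \eqref{eq:first} and the stationarity condition above, with $L_t\equiv L_0$ and $\Omega=0$. Yet neither $M_t=\diag(a_t^2+\gamma,\,a_t^2-\gamma)$ nor $A_t$ is isospectral.

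Every ingredient of your transfer step (injectivity, local invertibility, continuity) holds in this example, so the failure sits precisely at the constancy of $c$. The extremality conditions therefore force isospectrality only when $f'$ is strictly monotone, which is the case the repaired argument covers.
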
 

\begin{remark}
    In Proposition \ref{prop:6}, injectivity of $f^\prime$ is assumed on the spectrum of $A_t$ which, with $A_t$ the solution to an optimization problem, is not known apriori. Global injectivity of $f^\prime$ is sufficient.
\end{remark}
\begin{remark}\label{rem:kalman}
 We propose in passing the {\em inverse problem} of isospectral control, in the spirit of \cite{kalman1964linear}, to determine the range of the map
 \[
 (\Sigma_0,\Sigma_T,f(\cdot))\mapsto \mathrm{spec}(A_t)
 \]
 over choices of a function $f(\cdot)$ in \eqref{eq:ff}.
 In other words, we would like to know what choices of eigenvalues in $D$ in setting up Problem \ref{prob1} may arise as solutions to Problem \ref{prob2}, for a suitable choice of $f$. In Kalman's spirit, given an isospectral $A_\cdot$, {\em how can we tell that it is optimal for a suitable functional}? This question remains open at present.
\end{remark}

\section{Conditions for Isospectral Controllability}\label{isospcon}
We briefly recall {\em vector majorization}, a concept that is central to the results of this section.
Majorization represents a partial order in $\mathbb R^n$ that quantifies in a certain sense the degree of non-uniformity among the vector-entries; intuitively $\mathbf x\in\mathbb R^n$ is majorized by $\mathbf y\in\mathbb R^n$ if the sum of their entries is the same and, individually, the entries of $\mathbf x$ are  ``closer to each other'' than those of $\mathbf y$. More precisely, $\mathbf y$ majorizes $\mathbf x$ if $\mathbf x$ can be obtained from $\mathbf y$ by a finite sequence of ``Robin Hood'' redistribution of ``wealth''---taking from larger components of $\mathbf y$ and redistributing to smaller ones \cite{MOA11}.
Formally,
with $\mathbf x,\mathbf y\in\mathbb R^n$, let $\mathbf x^\downarrow,\mathbf y^\downarrow$ denote column vectors with entries equal to those of $\mathbf x,\mathbf y$ rearranged in decreasing order;
i.e., such that $x^\downarrow_1 \ge x^\downarrow_2 \ge \dots \ge x^\downarrow_n$ and similarly for $\mathbf y^\downarrow$. We say that $\mathbf x$ is {\em majorized} by $\mathbf y$ and denote by $\mathbf x \prec \mathbf y$, if
$\sum_{i=1}^n x_i = \sum_{i=1}^n y_i$ and for all $k\in\{1,\ldots, n-1\}$, 
    \begin{align*}
    \sum_{i=1}^k x^\downarrow_i &\le \sum_{i=1}^k y^\downarrow_i.
    \end{align*}
    An alternative characterization is given by a theorem of Hardy--Littlewood--P\'olya that $\mathbf x\prec \mathbf y$ if and only if 
    $\mathbf x=P \mathbf y$ for some doubly stochastic\footnote{A matrix with non-negative entries and column/row sums equal to $1$.} matrix $P$.
    
    We next state the main necessary condition for a terminal covariance $\Sigma_T>0$ to be reachable from $\Sigma_0$ as sought in Problem \ref{prob1}. The proof, which relies on tools from multilinear algebra, is given in Section~\ref{sec:proof-main} after the necessary background is developed in Section~\ref{sec:background}.
\begin{theorem}\label{maintheorem}
    If $\Sigma_{T}\in\mathcal{R}(\Sigma_0,D,T)$, then it holds that\footnote{Note that in the right hand side of \eqref{eq:majorization}, the two vectors are added after being sorted in decreasing order. It is this vector that majorizes $\log{\blambda_T}$.} 
    \begin{align}\label{eq:majorization}
       \log{\blambda_T}\prec \log{\blambda_0^\downarrow}+2T{\blambda_D^\downarrow},
    \end{align}
    with  $\log{\blambda_{T }}$, $\log{\blambda_0}$, and ${\blambda_D}$ be the vectors of eigenvalues of $\log\Sigma_{T}$, $\log\Sigma_0$, and $D$, respectively. 
\end{theorem}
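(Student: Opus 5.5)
The plan is to reduce the majorization \eqref{eq:majorization} to a family of inequalities on partial sums of largest eigenvalues. Since traces are fixed, equality of the total sums is automatic: $\det\Sigma_T=\det\Sigma_0\, e^{2T\tr D}$, because $\frac{d}{dt}\log\det\Sigma_t=2\tr A_t=2\tr D$. So it suffices to show, for each $k\in\{1,\ldots,n-1\}$,
\[
\sum_{i=1}^k \log\lambda_i^\downarrow(\Sigma_T)\le \sum_{i=1}^k \log\lambda_i^\downarrow(\Sigma_0)+2T\sum_{i=1}^k\lambda_i^\downarrow(D).
\]
The left side is $\log\lambda_{\max}$ of the $k$-th compound $C_k(\Sigma_T)=\Lambda^k\Sigma_T$. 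This is why exterior powers are needed.

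First I will pass to the $k$-th exterior power. Write $\Sigma_t=\Phi_t\Sigma_0\Phi_t^\top$ with $\dot\Phi_t=A_t\Phi_t$ and $\Phi_0=I$. Applying $\Lambda^k$ gives $S_t:=\Lambda^k\Sigma_t=\Lambda^k\Phi_t\,S_0\,(\Lambda^k\Phi_t)^\top$. Differentiating, $\dot S_t=\mathcal A_t S_t+S_t\mathcal A_t$, where $\mathcal A_t=A_t^{[k]}$ is the additive compound (the derivation induced on $\Lambda^k\mathbb R^n$). Since $A_t=U_tDU_t^\top$, we have $\mathcal A_t=(\Lambda^kU_t)D^{[k]}(\Lambda^kU_t)^\top$. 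Here $\Lambda^k U_t$ is orthogonal and $D^{[k]}$ is diagonal with entries $\sum_{i\in I}\lambda_i(D)$ over $k$-subsets $I$. Hence $\mathcal A_t$ is symmetric with $\lambda_{\max}(\mathcal A_t)=\sum_{i=1}^k\lambda_i^\downarrow(D)$ for all $t$.

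Next comes the scalar Grönwall step on $\lambda_{\max}(S_t)$. For a symmetric Lyapunov flow, take a unit top eigenvector $v$ of $S_t$. Then $\frac{d}{dt}\lambda_{\max}(S_t)\le v^\top\dot S_t v=2\lambda_{\max}(S_t)\,v^\top\mathcal A_t v\le 2\lambda_{\max}(S_t)\lambda_{\max}(\mathcal A_t)$. This holds in the sense of upper Dini derivatives, since $\lambda_{\max}$ is a maximum of linear functionals and is Lipschitz along absolutely continuous paths. A cleaner alternative avoids derivatives: $\|\Lambda^k\Phi_T\|\le \exp\big(\int_0^T\lambda_{\max}(\mathcal A_t)dt\big)$, by the standard logarithmic-norm bound for $\frac{d}{dt}\|x\|^2=2x^\top\mathcal A_t x$. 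Then $\lambda_{\max}(S_T)\le\|\Lambda^k\Phi_T\|^2\lambda_{\max}(S_0)$. Either route gives $\lambda_{\max}(S_T)\le \lambda_{\max}(S_0)\,e^{2T\sum_{i\le k}\lambda_i^\downarrow(D)}$. Since $\lambda_{\max}(\Lambda^k M)=\prod_{i\le k}\lambda_i^\downarrow(M)$ for $M\succ0$, taking logs yields the desired partial-sum inequality.

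The main obstacle is the bookkeeping in the exterior algebra, not the estimates. One must verify that $\frac{d}{dt}\Lambda^k\Phi_t=A_t^{[k]}\Lambda^k\Phi_t$, and that the additive compound of $UDU^\top$ equals $(\Lambda^kU)D^{[k]}(\Lambda^kU)^\top$ with the stated spectrum. Both follow from functoriality $\Lambda^k(XY)=\Lambda^kX\,\Lambda^kY$, by differentiating at the identity. Measurability of $U_t$ only requires interpreting the ODE in the Carathéodory sense; the log-norm bound is robust to that. The final ordering of the right-hand side in \eqref{eq:majorization} as a sum of two decreasingly sorted vectors matches exactly the bound $\sum_{i\le k}(\log\lambda_{0,i}^\downarrow+2T\lambda_{D,i}^\downarrow)$, so the proof closes.
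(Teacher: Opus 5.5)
Your proposal is correct and follows essentially the same route as the paper: pass to the $k$-th exterior power, note that the additive compound $A_t^{[k]}$ is symmetric with largest eigenvalue $\sum_{i\le k}\lambda_i(D)$, apply a log-norm growth bound, identify the top eigenvalue/singular value of the compound with the product of the $k$ largest ones, and close the case $k=n$ with Liouville's formula. The only cosmetic difference is that you propagate $\wedge^k\Phi_t$ from $\Phi_0=I$ and use $\lambda_{\max}(\wedge^k\Sigma_T)\le\|\wedge^k\Phi_T\|^2\,\lambda_{\max}(\wedge^k\Sigma_0)$, whereas the paper starts from $X_0=\Sigma_0^{1/2}$ and bounds $\|\wedge^k X_T\|$ directly.
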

 
Before we proceed with the proof of Theorem \ref{maintheorem}, we present a case where the  majorization condition in the theorem is also sufficient, thereby fully characterizing the reachable set solving Problem \ref{prob1}. 
 
\begin{theorem}\label{thm:2}
   Let $\mathbf{\blambda_D}$ be the vector of eigenvalues of $D$. If $\Sigma_0=cI,~c>0$, then 
    \begin{align*}
      \mathcal{R}(cI,D,T)= \{\Sigma_T \in \mathbb{S}_{++}^n~:~\log\mathbf{\blambda_T}\prec (\log c)\mathbf{1}+2T\mathbf{\blambda_D}\},
    \end{align*}
    where $\log\mathbf{\blambda_{T }}$ is the vector of eigenvalues of $\log\Sigma_{T}$ and $\mathbf{1}$ is the vector of ones.
\end{theorem}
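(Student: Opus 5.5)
Necessity follows directly from Theorem~\ref{maintheorem}. With $\Sigma_0=cI$ we have $\log\blambda_0^\downarrow=(\log c)\mathbf 1$, which is invariant under sorting, so \eqref{eq:majorization} becomes $\log\blambda_T\prec(\log c)\mathbf 1+2T\blambda_D$. The real work is sufficiency. We must show that every $\Sigma_T$ whose log-spectrum is majorized by $(\log c)\mathbf 1+2T\blambda_D$ can be reached by an isospectral protocol $A_t=U_tDU_t^\top$.

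\textbf{Reduction.} Write $\Sigma_T=V\diag(e^{\mu_1},\dots,e^{\mu_n})V^\top$ with $V\in O(n)$. Conjugating the whole trajectory by $V$ preserves isospectrality and fixes $cI$, so it suffices to reach diagonal targets. By the Hardy--Littlewood--P\'olya theorem, $\boldsymbol\mu-(\log c)\mathbf 1=2T P\blambda_D$ for some doubly stochastic $P$. By Birkhoff--von Neumann, $P=\sum_{k=1}^m\theta_k\Pi_k$ is a convex combination of permutation matrices, with $\theta_k\ge0$ and $\sum_k\theta_k=1$.

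\textbf{Construction.} Split $[0,T]$ into consecutive subintervals of lengths $\theta_kT$. On the $k$-th subinterval use the constant gain $A_t=\Pi_kD\Pi_k^\top$, which is diagonal with entries $\Pi_k\blambda_D$ and is of the form \eqref{eq:control} with $U_t=\Pi_k\in O(n)$. All these gains are diagonal and therefore commute. Starting from the diagonal matrix $cI$, the solution of \eqref{eq:lyap} remains diagonal, and at time $T$
\[
\Sigma_T=c\,\exp\Big(2T\sum_k\theta_k\diag(\Pi_k\blambda_D)\Big)=\diag\big(e^{(\log c)\mathbf 1+2TP\blambda_D}\big),
\]
which is the required target. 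Undoing the conjugation by $V$ gives $U_t=V\Pi_k$ on each subinterval. This choice of $U_t$ is piecewise constant and hence measurable.

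\textbf{Main obstacle.} The step that needs care is the reduction to commuting diagonal gains. It works only because $\Sigma_0=cI$ is invariant under every rotation, which lets us align the eigenbasis of $\Sigma_T$ with the coordinate axes at no cost. For a general $\Sigma_0$ the gains would not commute with $\Sigma_t$, and this construction would break down. Apart from that, we only need to confirm the Birkhoff decomposition and check that switching gains on a finite partition is admissible, both of which are routine.
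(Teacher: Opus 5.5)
Your proof is correct and follows essentially the same route as the paper. You use Hardy--Littlewood--P\'olya and Birkhoff--von Neumann to write the target log-spectrum as $(\log c)\mathbf 1+2T\sum_\pi\alpha_\pi\Pi_\pi\blambda_D$, then a piecewise-constant schedule of commuting gains $\Pi_\pi D\Pi_\pi^\top$ on subintervals of length $\alpha_\pi T$, conjugated by an orthogonal matrix diagonalizing $\Sigma_T$; the only differences are that you integrate the Lyapunov equation directly on diagonal matrices instead of the factor $X_t$ with $X_0=\sqrt{c}\,I$, and you state explicitly that necessity is the specialization of Theorem~\ref{maintheorem}, which the paper's proof leaves implicit.
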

\vspace{3mm}
\begin{proof}
   By the Hardy-Littlewood-P\'olya Theorem \cite[Theorem B.2]{MOA11}, the majorization
    $$
\log\mathbf{\blambda_T}\prec (\log c)\mathbf{1}+2T\mathbf{\blambda_D},$$
 is equivalent to the existence of a bistochastic matrix $P$ such that 
\begin{align*}
    \log\mathbf{\blambda_T}&=P((\log c )\mathbf{1}+2T\mathbf{\blambda_D})\\
    &=(\log c) \mathbf{1}+2TP\mathbf{\blambda_D}.
\end{align*}
By the Birkhoff-von Neumann Theorem \cite[Page 30, Theorem A.2]{MOA11}, any bistochastic matrix $P$ lies within the convex hull of permutation matrices. Thus,  $\log{\blambda_T}$ can be obtained from a convex combination of permuted copies of $2T{\blambda_D}$, i.e., there exist coefficients $\alpha_\pi$ such that 
\begin{align*}
    &\log{\blambda_T}  = (\log c)\mathbf{1}+2T\sum_{\pi\in S_n}\alpha_\pi \Pi_\pi \mathbf{\blambda_D},\\
    &\alpha_\pi\geq 0,~~\sum_{\pi\in S_n}\alpha_\pi = 1,
\end{align*}
where $S_n$ is the symmetric group of permutations and $\Pi_\pi$ are the corresponding permutation matrices; note that permutation matrices are also orthogonal, i.e., $\Pi_\pi\in O(n)$. 
Consider again the bilinear system 
\[
  \dot{X}_t = A_t X_t, \qquad X_0 = \sqrt{c}I.
\]
By partitioning the time-interval $[0,T]$ into disjoint union of subsets $I_\pi$ of length $\alpha_\pi T$, namely,
\[
[0,T] =\bigcup_\pi I_\pi, \mbox{ with }I_\pi\cap I_{\pi^\prime}=\varnothing \mbox{ when }\pi\neq \pi^\prime,
\]
and defining the  piecewise-constant control 
$$
A_t = W\Pi_\pi D \Pi_\pi^\top W^\top, \mbox{ for }t\in I_\pi,
$$ 
where $W\in O(n)$ diagonalizes $\Sigma_T$. Starting from $X_0=\sqrt{c}I$ we get\footnote{Note that the commutator $[A_t,A_s]=0$ for any $t,s\in[0,T]$, because $\Pi_\pi D\Pi_\pi^\top$ is diagonal. This justifies the unordered products, since all matrices that arise commute.}
\begin{align*}
  X_{T}
  &= \prod_{\pi\in S_n} e^{\,\alpha_\pi T W\Pi_\pi D \Pi_\pi^\top W^\top}X_0,\\
  &= e^{\sum_{\pi\in S_n}\alpha_\pi T W\Pi_\pi D \Pi_\pi^\top W^\top}e^{\frac{1}{2}(\log c)I}\\
  &= We^{\frac{1}{2}(\log c)I+\sum_{\pi\in S_n}\alpha_\pi T \Pi_\pi D \Pi_\pi^\top }W^\top\\
  &= We^{\frac12\diag(\log(\blambda_T))}W^\top\\
  &= W(\text{diag}(\mathbf{\blambda_T}))^{1/2}W^\top = \Sigma_T^{1/2}.   
\end{align*}
Thus $A_t$ satisfies \eqref{eq:control} and steers $\Sigma_0$ into $\Sigma_T$ via \eqref{eq:lyap}, proving that $\Sigma_T$ is indeed reachable as claimed.   
\end{proof}  
\section{Background and Technical Lemmas}\label{sec:background}
 
The proof of Theorem~\ref{maintheorem} relies on multilinear algebra and, specifically, the theory of exterior powers and compound matrices. We provide a self-contained development of the necessary background and collect the needed key technical results in this section.
 
Two distinct ``compound'' operations appear throughout that are listed next for ease of reference \cite{flanders,MOA11}.
\begin{itemize}
\item[-] The \emph{multiplicative compound}
    (or \emph{$k$-th exterior power}) of a matrix $X$ is denoted by
    $\wedge^k\!X$. It is the $\binom{n}{k}\times\binom{n}{k}$ matrix whose entries are
    the $k\times k$ minors of $X$.
\item[-] The \emph{additive compound} of a matrix $A$ is denoted by
    $A^{[k]}$. It is the derivative of the multiplicative compound of $e^{tA}$
    at $t=0$:
    $A^{[k]}=\frac{d}{dt}\big|_{t=0}\wedge^k(e^{tA})$.
\end{itemize}
The relationship between the two is analogous to
that between a Lie group element and the corresponding
Lie algebra element: $\wedge^k\!X$ describes a finite
transformation on $k$-vectors, while $A^{[k]}$ describes its
infinitesimal generator. When both appear in the same formula, as
in the compound ODE of Lemma~\ref{lem:compound_ode}, $A_t^{[k]}$ is the state matrix of the differential equation.
 
\subsection{Exterior powers and wedge products}\label{sec:wedge}
 
Given two vectors $\mathbf  u,\mathbf v\in\mathbb{R}^n$, one seeks a rigorous algebraic representation of the ``oriented area'' of the parallelogram they span. In $\mathbb{R}^3$, the magnitude of this area is traditionally represented indirectly by the normal vector generated by the cross product $\bf u\times v$. The \emph{wedge product} $\bf u\wedge v$ captures this oriented area directly as a distinct geometric object called a \emph{bivector}, providing a framework that naturally scales to any dimension. Its key property is \emph{antisymmetry}:
\[
\mathbf  u\wedge \mathbf v = -\,\mathbf v\wedge \mathbf u,
\]
which encodes orientation, i.e., swapping the two vectors flips the sign. In particular, ${\bf u\wedge u} = 0$: a parallelogram with two identical sides has zero area.
 
More generally, for $k$ vectors ${ \mathbf v}_1,\ldots,{\mathbf v}_k\in\mathbb{R}^n$, the wedge
product ${\bf v}_1\wedge\cdots\wedge {\bf v}_k$ represents the oriented $k$-dimensional
volume of the parallelepiped they span. It changes sign under any
transposition of two vectors, and vanishes whenever two vectors coincide
(or, more generally, whenever ${\mathbf  v}_1,\ldots,{\mathbf v}_k$ are linearly dependent).
 
Now let $\mathbf  e_1,\ldots,\mathbf e_n$ be the standard basis of $\mathbb{R}^n$. The
\emph{$k$-th exterior power} $\wedge^k\mathbb{R}^n$ is the vector space
spanned by elements
\[
  {\mathbf  e_I :=\mathbf e_{i_1}\wedge \mathbf e_{i_2}\wedge\cdots\wedge \mathbf e_{i_k}},
  \qquad 1\le i_1 < i_2 < \cdots < i_k \le n,
\]
where $I=(i_1,\ldots,i_k)$ is an ordered multi-index.
There are $\binom{n}{k}$ such basis
elements, and so $\dim(\wedge^k\mathbb{R}^n)=\binom{n}{k}$.
 
For any ${\mathbf v_1,\ldots,\mathbf v_k}\in\mathbb{R}^n$, with
${\mathbf v_j} = \sum_{i=1}^n v_{ij}{\mathbf e_i}$, the wedge product ${\mathbf v_1\wedge\cdots\wedge \mathbf v_k}$ is given by 
\begin{equation}\label{eq:wedge_coords}
  \sum_{1\le i_1<\cdots<i_k\le n}
    \det\bigl(v_{i_m\ell}\bigr)_{1\le m,\ell\le k}\;{\mathbf e_{i_1}\wedge\cdots
    \wedge \mathbf e_{i_k}}.
\end{equation}
That is, the coordinate of ${\mathbf v_1\wedge\cdots\wedge \mathbf v_k}$ along ${\mathbf e_I}$ is the $k\times k$ minor of the $n\times k$ matrix
$[v_{ij}]_{i,j=1}^{n,k}$, with the rows selected according to $I$.
When $k=n$, the space $\wedge^n\mathbb{R}^n$ is one-dimensional and
$\mathbf v_1\wedge\cdots\wedge \mathbf v_n = \det([v_{ij}])\;\mathbf e_1\wedge\cdots\wedge \mathbf e_n$.
 
The standard inner product on $\mathbb{R}^n$ induces one on
$\wedge^k\mathbb{R}^n$ by declaring the basis $\{e_I\}$ to be orthonormal:
$\langle \mathbf e_I,\,\mathbf e_J\rangle = \delta_{IJ}$, the Dirac delta.
 
\begin{example}[$n=3,\;k=2$]\label{ex:wedge32}
Let $\mathbf u=(u_1,u_2,u_3)^\top$ and $\mathbf v=(v_1,v_2,v_3)^\top$. The wedge product is
\[
  \mathbf u\wedge \mathbf v
  = \begin{vmatrix}
      u_1& v_1\\
      u_2& v_2
  \end{vmatrix}\,\mathbf e_1\wedge \mathbf e_2
  + \begin{vmatrix}
      u_1& v_1\\
      u_3& v_3
  \end{vmatrix}\,\mathbf e_1\wedge \mathbf e_3
  + \begin{vmatrix}
      u_2& v_2\\
      u_3& v_3
  \end{vmatrix}\,\mathbf e_2\wedge \mathbf e_3.
\]
The three components are the $2\times 2$ minors of the $3\times 2$ matrix $(\mathbf u|\mathbf v)$, and the result is a vector in $\wedge^2\mathbb{R}^3\cong\mathbb{R}^3$. In $\mathbb{R}^3$, this is equivalent to the cross product $\mathbf u\times \mathbf v$.
\end{example}
 
\subsection{The multiplicative compound $\wedge^k\!X$}\label{sec:mult_compound}
 
A linear map $X:\mathbb{R}^n\to\mathbb{R}^n$ induces a linear map
$\wedge^k\! X:\wedge^k\mathbb{R}^n\to\wedge^k\mathbb{R}^n$ defined by
\begin{equation}\label{eq:induced}
  (\wedge^k\! X)(\mathbf v_1\wedge\cdots\wedge \mathbf v_k)
  := (X\mathbf v_1)\wedge\cdots\wedge(X\mathbf v_k),
\end{equation}
extended to all of $\wedge^k\mathbb{R}^n$ by linearity.
The matrix representation of $\wedge^k\! X$ in the basis $\{\mathbf e_I\}$ has entries
\begin{equation}\label{eq:compound_entries}
  (\wedge^k\! X)_{IJ}
  = \det\bigl(X_{i_\ell,j_m}\bigr)_{1\le\ell,m\le k},
\end{equation}
the $k\times k$ minor of $X$ with rows $I$ and columns $J$.
 
\begin{example}[$n=3,\;k=2$]\label{ex:mult_compound}
For $X=(X_{ij})_{3\times 3}$, the multiplicative compound $\wedge^2 X$ is a $3\times 3$ matrix with rows and columns indexed by $(1,2),(1,3),(2,3)$:
\[
  \wedge^2\! X =
  \begin{pmatrix}
   \begin{vmatrix}
       X_{11}& X_{12}\\ X_{21}& X_{22}
   \end{vmatrix}  & \begin{vmatrix}
       X_{11}& X_{13}\\ X_{21}& X_{23}
   \end{vmatrix} &
    \begin{vmatrix}
       X_{12}& X_{13}\\ X_{22}& X_{23}
   \end{vmatrix}\\[11pt]
     \begin{vmatrix}
       X_{11}& X_{12}\\ X_{31}& X_{32}
   \end{vmatrix} & \begin{vmatrix}
       X_{11}& X_{13}\\ X_{31}& X_{33}
   \end{vmatrix} &
      \begin{vmatrix}
       X_{12}& X_{13}\\ X_{32}& X_{33}
   \end{vmatrix}\\[11pt]
   \begin{vmatrix}
       X_{21}& X_{22}\\ X_{31}& X_{32}
   \end{vmatrix} &  \begin{vmatrix}
       X_{21}& X_{23}\\ X_{31}& X_{33}
   \end{vmatrix} &
      \begin{vmatrix}
       X_{22}& X_{23}\\ X_{32}& X_{33}
   \end{vmatrix}
  \end{pmatrix}.
\]
The boundary cases are  $\wedge^1 X = X$ and $\wedge^n X = (\det X)$, the $1\times 1$ matrix consisting of the determinant.
\end{example}
 
The multiplicative compound satisfies three fundamental algebraic properties:
\begin{itemize}
    \item[-] \emph{Multiplicative}: $\wedge^k(XY) = (\wedge^k\! X)(\wedge^k\! Y)$.
    \footnote{Follows from applying \eqref{eq:induced} to $XY$: $(XYv_1)\wedge\cdots\wedge(XYv_k) = (\wedge^k\! X)((Yv_1)\wedge\cdots\wedge(Yv_k))$.}
    \item[-] \emph{Transposition}: $\wedge^k(X^\top) = (\wedge^k\! X)^\top$.  \footnote{Follows from \eqref{eq:compound_entries} and the transpose symmetry of minors.} 
    \item[-] \emph{Identity}: $\wedge^k I_n = I_{\binom{n}{k}}$, the $\binom{n}{k}\times \binom{n}{k}$ identity matrix.
\end{itemize}
It follows that if $A$ is symmetric/orthogonal, then so is $\wedge^k\! A$.
 
\begin{remark}
    Ultimately, the multiplicative compound map $\wedge^k$ should be thought of as a functor, called the $\textit{k-th exterior power functor}$, from the category of finite-dimensional vector spaces to itself. It simply takes the objects (vector spaces $V$) to their $k$-th exterior powers ($\Lambda^k V$), and the morphisms (matrices $X$) to their $k$-th compounds $\Lambda^kX$. Many properties enjoyed by $X$ are automatically inherited by $\wedge ^k$. These include invertibility, symmetry, orthogonality, and transposition. Most crucially, if one knows the eigenvalues and eigenvectors of $X$, then we immediately know what they are for $\wedge^k X$. The eigenvectors are simply wedge products of those of $X$, and the eigenvalues are simply the \textit{products} of those of $X$. Another interesting property is that of rank. If the original matrix $X$ has rank $r$, then the rank of $\wedge^k X$ is ${r\choose k}$. If $k$ is strictly greater than $r$, the compound matrix is set to be the zero matrix.
    Note that functors satisfy two basic properties: preservation of the identity and of composition. The latter \textit{is} the celebrated Cauchy-Binet formula \cite{flanders}. The functorial approach in multilinear algebra can often help elegantly bypass nightmarish computations/proofs involving lots of indices and symbols!
\end{remark}

\subsection{The additive compound $A^{[k]}$}\label{sec:add_compound}
 
While $\wedge^k\! X$ describes how a finite transformation acts on $k$-volumes, the {additive compound} $A^{[k]}$ describes how an {infinitesimal} transformation acts. Formally, it is the derivative of the multiplicative compound at the identity:
\begin{equation}\label{eq:add_compound_def}
  A^{[k]} :=  \frac{d}{dt}\Bigr|_{t=0}\wedge^k(e^{tA}),
\end{equation}
Note that $e^{tA}$ in \eqref{eq:add_compound_def} can be replaced by $(I_n+tA)$. Expanding $\wedge^k(I_n+tA)$ to first order in $t$, each entry is a $k\times k$ minor of $I_n+tA$. The $O(1)$ term is $\delta_{IJ}$ and the $O(t)$ term retains exactly one factor of $A$ while the remaining $k-1$ factors come from $I$. This yields the explicit formula:
 
\begin{itemize}
\item \emph{Diagonal entries}: $A^{[k]}_{II} = \sum_{\ell=1}^k A_{i_\ell,i_\ell}$, the sum of the diagonal entries of $A$ indexed by $I$.
\item \emph{Off-diagonal entries}: If $I$ and $J$ differ in exactly one element ($i_r\neq j_s$, with all other elements shared), then $A^{[k]}_{IJ} = (-1)^{r+s}A_{i_r,j_s}$. If $I$ and $J$ differ in two or more elements, $A^{[k]}_{IJ} = 0$.
\end{itemize}
 
\begin{example}[$n=3,\;k=2$]\label{ex:add_compound_3}
With the lexicographic basis order $(1,2),\,(1,3),\,(2,3)$, the additive compound of $A\in\mathbb{R}^{3\times 3}$ is the $3\times 3$ matrix
\[
  A^{[2]} =
  \begin{pmatrix}
    A_{11}+A_{22} & A_{23} & -A_{13}\\[3pt]
    A_{32} & A_{11}+A_{33} & A_{12}\\[3pt]
    -A_{31} & A_{21} & A_{22}+A_{33}
  \end{pmatrix}.
\]
To verify the off-diagonal signs: indices $(1,2)$ and $(1,3)$ differ only in the second position ($2\to 3$, so $r=s=2$), giving $(-1)^{2+2}A_{23}=A_{23}$. Indices $(1,2)$ and $(2,3)$ differ in the first position of $I$ and the second of $J$ ($1\to 3$, $r=1$, $s=2$), giving $(-1)^{1+2}A_{13}=-A_{13}$.
\end{example}
 
\begin{example}[Boundary cases]\label{ex:boundary}
For $k=1$: $A^{[1]}=A$. For $k=n$: $A^{[n]}=(\tr A)$, the $1\times 1$ matrix, i.e., a scalar, consisting of the trace; this is consistent with $\frac{d}{dt}\det(e^{tA})\big|_{t=0} = \tr A$.
\end{example}
 
The key distinction between the two compounds is captured by the exponential relationship
\begin{equation}\label{eq:exp_relation}
\wedge^k(e^{tA}) = e^{tA^{[k]}},
\end{equation}
which follows from the multiplicativity of $\wedge^k$ and the definition \eqref{eq:add_compound_def}. Equation \eqref{eq:exp_relation} is the precise sense in which $A^{[k]}$ generates $\wedge^k(e^{tA})$.
 
\subsection{Technical lemmas}
 
We now state and prove the four lemmas that underpin the proof of Theorem~\ref{maintheorem}. The first three concern the algebraic structure of compound matrices, while the fourth provides an analytic growth estimate.
 
\begin{lemma}[Singular values of the multiplicative compound]\label{lem:compound_svd}
Let $X = U\diag(\sigma_1,\ldots,\sigma_n)V^\top$ be the SVD of
$X\in\mathbb{R}^{n\times n}$ with $\sigma_1\ge\cdots\ge\sigma_n\ge 0$. Then
\[
  \wedge^k\! X
  = (\wedge^k\! U)\,\diag\Bigl(\sigma_{i_1}\cdots\sigma_{i_k}
    \Bigr)_{1\le i_1<\cdots<i_k\le n}\,(\wedge^k\! V)^\top.
\]
In particular, the singular values of $\wedge^k\! X$ are the $\binom{n}{k}$
products $\sigma_{i_1}\cdots\sigma_{i_k}$, and the largest singular
value is
\[
  \|\wedge^k\! X\| = \prod_{i=1}^k\sigma_i(X).
\]
\end{lemma}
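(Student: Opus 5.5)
The plan is to let the three functorial properties of $\wedge^k$ recorded just above (multiplicativity, transposition, identity) do all the work, with a single hands-on computation: the compound of a diagonal matrix.

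\emph{Step 1 (diagonal matrices).} Let $\Delta=\diag(\sigma_1,\ldots,\sigma_n)$. By \eqref{eq:compound_entries}, $(\wedge^k\Delta)_{IJ}$ is the minor of $\Delta$ with rows $I$ and columns $J$.

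If $I\neq J$, some row index $i_\ell\in I$ does not belong to $J$. The corresponding row of the $k\times k$ submatrix is then identically zero, so the minor vanishes.

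If $I=J$, the submatrix is $\diag(\sigma_{i_1},\ldots,\sigma_{i_k})$, whose determinant is $\sigma_{i_1}\cdots\sigma_{i_k}$. Hence
\[
\wedge^k\Delta=\diag\bigl(\sigma_{i_1}\cdots\sigma_{i_k}\bigr)_{I}.
\]

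\emph{Step 2 (orthogonal factors).} For $U\in O(n)$, combine transposition, multiplicativity and identity:
\[
(\wedge^k U)^\top(\wedge^k U)=\wedge^k(U^\top U)=\wedge^k I_n=I_{\binom nk}.
\]
So $\wedge^k U$, and likewise $\wedge^k V$, is orthogonal.

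\emph{Step 3 (the factorization).} Apply multiplicativity twice and transposition once to $X=U\Delta V^\top$:
\[
\wedge^k X=(\wedge^k U)(\wedge^k\Delta)(\wedge^k V)^\top,
\]
which is the displayed formula. Since the outer factors are orthogonal and the middle factor is diagonal with nonnegative entries, this is a singular value decomposition of $\wedge^k X$. Its singular values are therefore exactly the $\binom nk$ products $\sigma_{i_1}\cdots\sigma_{i_k}$.

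\emph{Step 4 (largest singular value).} We need the largest of these products. Because all $\sigma_i\ge 0$ and they are sorted in decreasing order, any multi-index satisfies $i_\ell\ge \ell$, and hence $\sigma_{i_\ell}\le\sigma_\ell$ for each $\ell$. Multiplying these nonnegative inequalities gives
\[
\sigma_{i_1}\cdots\sigma_{i_k}\le\sigma_1\cdots\sigma_k .
\]
The bound is attained at $I=(1,\ldots,k)$, so $\|\wedge^k X\|=\prod_{i=1}^k\sigma_i(X)$.

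There is no real obstacle here. The only points needing care are that the multiplicativity property is exactly the Cauchy--Binet formula, which the paper already takes as given, and that nonnegativity of the $\sigma_i$ is required both for the diagonal factor to count as a valid SVD and for the ordering argument in Step 4.
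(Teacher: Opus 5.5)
Your proposal is correct and follows the paper's proof essentially step for step: factor $\wedge^k X$ via multiplicativity and transposition, show $\wedge^k U$ and $\wedge^k V$ are orthogonal, compute the compound of the diagonal factor, and read off a valid SVD. Your Step~4 argument ($i_\ell\ge\ell$, hence $\sigma_{i_\ell}\le\sigma_\ell$) makes explicit why $\sigma_1\cdots\sigma_k$ is the largest product, which the paper simply asserts.
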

 
\begin{proof}
By multiplicativity and the transpose property of $\wedge^k$,
\[
  \wedge^k\! X
  = \wedge^k(U\Sigma V^\top)
  = (\wedge^k U)\,(\wedge^k \Sigma)\,(\wedge^k V)^\top,
\]
where $\Sigma = \diag(\sigma_1,\ldots,\sigma_n)$. Since $U$ and $V$ are
orthogonal, $(\wedge^k U)^\top (\wedge^k U) = \wedge^k(U^\top U) = \wedge^k I_n =
I_{\binom{n}{k}}$, so $\wedge^k U$ and $\wedge^k V$ are
orthogonal.
Since $\Sigma$ is diagonal, the $k\times k$ minor with rows $I$ and
columns $J$ vanishes unless $I=J$ (any off-diagonal minor of a diagonal matrix has a zero row or column), and equals $\sigma_{i_1}\cdots\sigma_{i_k}$ when $I=J$.
Therefore $\wedge^k \Sigma = \diag(\sigma_{i_1}\cdots\sigma_{i_k})$. This is a valid SVD (orthogonal factors, non-negative diagonal), and the largest diagonal entry is
$\sigma_1\cdots\sigma_k$.
\end{proof}

\begin{lemma}[Eigenvalues of the additive compound]\label{lem:compound_eig}
If $A$ is symmetric with eigenvalues $a_1\ge\cdots\ge a_n$, then $A^{[k]}$
is symmetric with eigenvalues
\[
  a_{i_1}+\cdots+a_{i_k},\qquad 1\le i_1<\cdots<i_k\le n,
\]
and largest eigenvalue $\sum_{i=1}^k a_i$.
\end{lemma}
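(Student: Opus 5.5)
We diagonalize $A$ orthogonally and then pass to the additive compound through the exponential relation \eqref{eq:exp_relation}, mirroring the argument given for Lemma~\ref{lem:compound_svd}. Write $A=Q\Lambda Q^\top$ with $Q\in O(n)$ and $\Lambda=\diag(a_1,\ldots,a_n)$. Then $e^{tA}=Qe^{t\Lambda}Q^\top$. By multiplicativity and the transposition property of $\wedge^k$ we get
\[
\wedge^k(e^{tA})=(\wedge^k Q)\,\wedge^k(e^{t\Lambda})\,(\wedge^k Q)^\top .
\]
As shown in the proof of Lemma~\ref{lem:compound_svd}, the factor $\wedge^k Q$ is orthogonal, and the compound of a diagonal matrix is diagonal with entries equal to products of the corresponding diagonal entries. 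Hence
\[
\wedge^k(e^{t\Lambda})=\diag\bigl(e^{t(a_{i_1}+\cdots+a_{i_k})}\bigr)_{I}.
\]

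Next we differentiate at $t=0$ using the definition \eqref{eq:add_compound_def}. The orthogonal factors do not depend on $t$, so
\[
A^{[k]}=(\wedge^k Q)\,\diag\bigl(a_{i_1}+\cdots+a_{i_k}\bigr)_I\,(\wedge^k Q)^\top .
\]
This is an orthogonal similarity applied to a real diagonal matrix. It follows at once that $A^{[k]}$ is symmetric and that its eigenvalues are exactly the sums $a_{i_1}+\cdots+a_{i_k}$ over multi-indices $1\le i_1<\cdots<i_k\le n$. For an independent check of symmetry, one can also use the explicit entry formula: when $A$ is symmetric we have $A_{i_r,j_s}=A_{j_s,i_r}$, and the sign $(-1)^{r+s}$ is unchanged when the roles of $I$ and $J$ are swapped.

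The last step is to identify the largest eigenvalue. Since $a_1\ge\cdots\ge a_n$, the sum of $k$ of the $a_i$ with distinct indices is maximized by taking the first $k$ indices. To see this, order any admissible index set increasingly, so that $i_\ell\ge\ell$; then $a_{i_\ell}\le a_\ell$ for each $\ell$. The maximum is therefore $\sum_{i=1}^k a_i$.

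No step here is a real obstacle. The only point that needs care is justifying that $\wedge^k(e^{t\Lambda})$ is the diagonal matrix of exponentiated sums. This follows from the observation, already made in the proof of Lemma~\ref{lem:compound_svd}, that the off-diagonal minors of a diagonal matrix vanish.
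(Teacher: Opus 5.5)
Your proof is correct and is essentially the paper's argument: both diagonalize $A$ orthogonally and use the wedge products $\mathbf w_{i_1}\wedge\cdots\wedge\mathbf w_{i_k}$ of its orthonormal eigenvectors (these are exactly the columns of $\wedge^k Q$) as an eigenbasis for $A^{[k]}$. The only difference is in the mechanics: you conjugate $\wedge^k(e^{t\Lambda})$ by $\wedge^k Q$ via multiplicativity and then differentiate, whereas the paper applies the product rule to $\wedge^k(I+tA)\,\mathbf w_I$. Your version also obtains the orthonormality of $\{\mathbf w_I\}$ from Lemma~\ref{lem:compound_svd} and justifies the largest-eigenvalue claim explicitly, both of which the paper only asserts.
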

 
\begin{proof}
Symmetry of $A^{[k]}$ follows from the transpose property: if $A=A^\top$, then $\wedge^k(I+tA)$ is symmetric for every $t$, and hence so is its derivative $A^{[k]}$ at $t=0$.
 
For the eigenvalues, let $A\mathbf w_j = a_j\mathbf w_j$ with
$\{\mathbf w_j\}$ orthonormal, and consider the element
$\mathbf w_I := \mathbf w_{i_1}\wedge\cdots\wedge \mathbf w_{i_k}\in\wedge^k\mathbb{R}^n$.
By the product rule applied to \eqref{eq:induced}:
\begin{align*}
  A^{[k]}\mathbf w_I
  &=\frac{d}{dt}\Bigr|_{t=0}\wedge^k(I+tA)\,\mathbf w_I\\
  &= \frac{d}{dt}\Bigr|_{t=0}(\mathbf w_{i_1}+tA\mathbf w_{i_1})\wedge\cdots\wedge(\mathbf w_{i_k}+tA\mathbf w_{i_k})\\
  &= \sum_{\ell=1}^k
     \mathbf w_{i_1}\wedge\cdots\wedge\underset{\ell\text{-th}}{(A\mathbf w_{i_\ell})}
     \wedge\cdots\wedge \mathbf w_{i_k}\\
  &= \sum_{\ell=1}^k a_{i_\ell}\;
     \mathbf w_{i_1}\wedge\cdots\wedge \mathbf w_{i_k}
  = \Bigl(\sum_{\ell=1}^k a_{i_\ell}\Bigr)\mathbf w_I.
\end{align*}
The key step is the fourth equality, where we use $A\mathbf w_{i_\ell}=a_{i_\ell}\mathbf w_{i_\ell}$ and the fact that replacing $\mathbf w_{i_\ell}$ by its scalar multiple does not change the other factors. Since $\{\mathbf w_I\}$ is an orthonormal basis for $\wedge^k\mathbb{R}^n$ (as $\{\mathbf w_j\}$ is for $\mathbb{R}^n$), we have identified all $\binom{n}{k}$ eigenvalues.
\end{proof}
 
\begin{example}
Consider $A=\diag(3,1,-4)$ with $n=3$, $k=2$. By Lemma~\ref{lem:compound_eig}, the eigenvalues of $A^{[2]}$ are the three pairwise sums: $3+1=4$, $3+(-4)=-1$, $1+(-4)=-3$. One may verify this directly from Example~\ref{ex:add_compound_3}:
\[
A^{[2]}=\begin{pmatrix}4 & 0 & 0\\ 0 & -1 & 0\\ 0 & 0 & -3\end{pmatrix},
\]
which is diagonal with the claimed eigenvalues. For $k=3$: $A^{[3]}=(\tr A)=(0)$.
\end{example}

\begin{lemma}[Compound ODE]\label{lem:compound_ode}
If $X_t$ solves $\dot X_t = A_t X_t$, then the multiplicative compound $\wedge^k\!X_t$ solves
\begin{equation}\label{eq:compound_ode}
  \frac{d}{dt}(\wedge^k\! X_t) = A_t^{[k]}\,(\wedge^k\! X_t), \qquad \wedge^k\! X_0 = \wedge^k\!(X_0).
\end{equation}
That is, the multiplicative compound evolves under the additive compound of the coefficient.
\end{lemma}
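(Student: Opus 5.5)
The plan is to differentiate $\wedge^k X_t$ directly and identify the derivative through the multiplicativity of $\wedge^k$ together with the definition \eqref{eq:add_compound_def} of the additive compound. Fix $t$ and a small increment $h$. Since $\dot X_t = A_t X_t$, we have
\[
X_{t+h}=(I_n+hA_t)X_t+o(h).
\]
Here the $o(h)$ term is to be read for $t$ a Lebesgue point of $A_\cdot$ when $A_\cdot$ is only measurable, or literally when $A_\cdot$ is continuous. Because $\wedge^k$ is a polynomial map in the matrix entries, it is locally Lipschitz. Hence
\[
\wedge^k X_{t+h}=\wedge^k\big((I_n+hA_t)X_t\big)+o(h)=\wedge^k(I_n+hA_t)\,\wedge^k X_t+o(h),
\]
where the second equality uses multiplicativity.

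Next I expand $\wedge^k(I_n+hA_t)$ to first order in $h$. By the remark following \eqref{eq:add_compound_def}, $e^{tA}$ may be replaced by $I_n+tA$ in that definition. This gives
\[
\wedge^k(I_n+hA_t)=I_{\binom nk}+hA_t^{[k]}+O(h^2),
\]
using the identity property $\wedge^k I_n=I_{\binom nk}$. Substituting and subtracting $\wedge^k X_t$, then dividing by $h$ and letting $h\to0$, yields
\[
\frac{d}{dt}\wedge^k X_t=A_t^{[k]}\wedge^k X_t.
\]
The initial condition $\wedge^k X_0=\wedge^k(X_0)$ holds trivially. An equivalent route is to apply the product rule to the multilinear definition \eqref{eq:induced}:
\[
\frac{d}{dt}\big(X_t\mathbf v_1\wedge\cdots\wedge X_t\mathbf v_k\big)=\sum_\ell X_t\mathbf v_1\wedge\cdots\wedge A_tX_t\mathbf v_\ell\wedge\cdots\wedge X_t\mathbf v_k .
\]
Then one observes, exactly as in the proof of Lemma~\ref{lem:compound_eig}, that the map $\mathbf w_1\wedge\cdots\wedge\mathbf w_k\mapsto\sum_\ell \mathbf w_1\wedge\cdots\wedge A\mathbf w_\ell\wedge\cdots\wedge\mathbf w_k$ is $A^{[k]}$.

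The main obstacle is regularity. The controls $U_t$ are merely measurable, so $X_t$ is absolutely continuous and the equation holds only almost everywhere. I would handle this by running the argument above at almost every $t$: absolute continuity of $X_t$ gives a.e.\ differentiability, and composition with the polynomial map $\wedge^k$ preserves absolute continuity. The identity $\frac{d}{dt}\wedge^k X_t=A_t^{[k]}\wedge^k X_t$ then holds a.e.\ in the Carathéodory sense, which suffices. Finally, uniqueness of solutions of this linear ODE identifies $\wedge^k X_t$ as its solution.
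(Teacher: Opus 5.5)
Your proposal is correct and follows essentially the same route as the paper's time-varying argument: you write $X_{t+h}=(I_n+hA_t)X_t+o(h)$, use multiplicativity of $\wedge^k$, expand $\wedge^k(I_n+hA_t)=I_{\binom nk}+hA_t^{[k]}+O(h^2)$, and pass to the limit. The paper also treats constant $A$ first via $\wedge^k(e^{tA})=e^{tA^{[k]}}$, which is not needed, and your handling of the remainder (local Lipschitz continuity of $\wedge^k$) and of almost-everywhere differentiability for measurable controls makes rigorous what the paper only sketches with ``$\approx$''.
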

 
\begin{proof}
For constant $A$, $X_t = e^{tA}$ and by the exponential relation \eqref{eq:exp_relation},
$\wedge^k\! X_t = \wedge^k(e^{tA}) = e^{tA^{[k]}}$.
Differentiating:
\begin{align*}
  \frac{d}{dt}\wedge^k(e^{tA})
  &= \frac{d}{dt}e^{tA^{[k]}}
  = A^{[k]}\cdot e^{tA^{[k]}}
  = A^{[k]}\cdot(\wedge^k\! X_t).
\end{align*}
For time-varying $A_t$, the same conclusion follows from the multiplicativity of $\wedge^k$. For small $h$,
$X_{t+h}\approx(I+hA_t)X_t$, so
\[
  \wedge^k\! X_{t+h}\approx\wedge^k(I+hA_t)\cdot(\wedge^k\! X_t)
  \approx(I+hA_t^{[k]})\,(\wedge^k\! X_t).
\]
Subtracting $\wedge^k\! X_t$, dividing by $h$, and taking $h\to 0$ gives $\frac{d}{dt}(\wedge^k\! X_t)=A_t^{[k]}\,(\wedge^k\! X_t)$.
\end{proof}

\begin{lemma}[Operator norm growth]\label{lem:norm_growth}
Let $Z$ be an absolutely continuous function $Z:[0,T ]\to\mathbb R^{n\times n}$ that satisfies
\[
\dot Z_t=B_tZ_t
~~\text{for a.e. }t\in[0,T ],~~\det(Z_0) \neq 0,
\]
where $B:[0,T ]\to\mathbb R^{n\times n}$ is symmetric and integrable. Then
\[
\frac{d}{dt}\log\|Z_t\|\le \lambda_{\max}(B_t)
~~\text{for a.e. }t\in[0,T ],
\]
with $\|Z_t\|>0$ on $[0,T ]$.
\end{lemma}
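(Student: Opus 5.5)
The plan is to work with the Gram quantity $\|Z_t\|^2=\lambda_{\max}(Z_tZ_t^\top)$ instead of the norm itself, since its time derivative is easy to compute. First, positivity. Let $\Phi_t$ be the fundamental matrix of $\dot\Phi=B_t\Phi$ with $\Phi_0=I$; it exists and is absolutely continuous because $B$ is integrable. By uniqueness of Carathéodory solutions, $Z_t=\Phi_tZ_0$. Liouville's formula gives $\det\Phi_t=\exp\int_0^t\tr B_s\,ds>0$, so $\det Z_t\neq0$ for every $t$. Hence $\|Z_t\|>0$, and $t\mapsto\log\|Z_t\|$ is well defined.

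Second, regularity. Set $\phi(t):=\|Z_t\|$. The map $Z\mapsto\|Z\|$ is $1$-Lipschitz and $Z_\cdot$ is absolutely continuous, so $\phi$ is absolutely continuous. Since $\phi$ is bounded below by a positive constant on the compact interval $[0,T]$, the function $\log\phi$ is also absolutely continuous and is differentiable a.e. It therefore suffices to bound the derivative at every $t$ where both $\log\phi$ and $Z$ are differentiable and $\dot Z_t=B_tZ_t$ holds. This is a full-measure set.

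Third, the key estimate, which I would carry out with a one-sided difference quotient. Fix such a $t$ and a unit vector $\mathbf u$ with $\|Z_t^\top\mathbf u\|=\|Z_t\|$; this is a top left singular vector, so $\|Z_t\|=\sigma_1(Z_t^\top)$. For $h>0$ we have
\[
\phi(t+h)\ge\|Z_{t+h}^\top\mathbf u\|=\|Z_t^\top\mathbf u+hZ_t^\top B_t\mathbf u+o(h)\|,
\]
which gives only a lower bound. For the upper bound I would take a left difference quotient instead. For $h>0$,
\[
\phi(t-h)\ge\|Z_{t-h}^\top\mathbf u\|=\phi(t)-h\,\frac{\mathbf u^\top Z_tZ_t^\top B_t\mathbf u}{\phi(t)}+o(h),
\]
using $\tfrac{d}{ds}\tfrac12\|Z_s^\top\mathbf u\|^2=\mathbf u^\top B_tZ_tZ_t^\top\mathbf u$ at $s=t$ (since $B_t$ is symmetric). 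Because $\mathbf u$ is a top eigenvector of $Z_tZ_t^\top$, we have $Z_tZ_t^\top\mathbf u=\phi(t)^2\mathbf u$. The bound therefore becomes
\[
\phi(t-h)\ge\phi(t)-h\,\phi(t)\,\mathbf u^\top B_t\mathbf u+o(h).
\]
As a result,
\[
\phi'(t)=\lim_{h\downarrow0}\frac{\phi(t)-\phi(t-h)}{h}\le\phi(t)\,\mathbf u^\top B_t\mathbf u\le\phi(t)\,\lambda_{\max}(B_t)
\]
by the Rayleigh quotient. Dividing by $\phi(t)>0$ gives $\tfrac{d}{dt}\log\|Z_t\|\le\lambda_{\max}(B_t)$.

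The main obstacle is that $\|Z_t\|$ is not differentiable where the top singular value is repeated. The left-quotient trick avoids this: the max-type lower bound holds at every $t$, and at points where $\phi$ is differentiable the left derivative equals the derivative. The only subtlety is that the first-order expansion of $Z_{t-h}$ requires differentiability of $Z$ at $t$, which holds a.e. An alternative route, if preferred, is a Grönwall-type comparison: show $\|Z_t\|\le\|Z_s\|\exp\int_s^t\lambda_{\max}(B_r)\,dr$ via $\tfrac{d}{dt}\mathbf x_t^\top\mathbf x_t\le2\lambda_{\max}(B_t)\|\mathbf x_t\|^2$ for $\mathbf x_t=Z_t\mathbf v$, then differentiate a.e. I would keep this as the fallback.
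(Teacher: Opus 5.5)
Your proof is correct. Your main route differs from the paper's, and your ``fallback'' is essentially what the paper does.

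\textbf{The paper's route.} It fixes a unit vector $\mathbf v$ and uses that $Z_t\mathbf v$ itself solves $\dot{\mathbf x}=B_t\mathbf x$. This gives $\frac{d}{dt}\|Z_t\mathbf v\|^2\le 2\lambda_{\max}(B_t)\|Z_t\mathbf v\|^2$, which it integrates to $\|Z_t\mathbf v\|\le e^{\int_s^t\lambda_{\max}(B_\tau)\,d\tau}\|Z_s\mathbf v\|$. Taking the maximizing $\mathbf v$ at time $t$ yields $\|Z_t\|\le e^{\int_s^t\lambda_{\max}(B_\tau)\,d\tau}\|Z_s\|$. It then reads off the a.e.\ derivative bound from the fact that $\log\|Z_t\|-\int_0^t\lambda_{\max}(B_\tau)\,d\tau$ is absolutely continuous and non-increasing.

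\textbf{Your route.} You argue pointwise. At a point where $\phi$ and $Z$ are differentiable, the map $s\mapsto\|Z_s^\top\mathbf u\|$, with $\mathbf u$ a top left singular vector at time $t$, is a minorant of $\phi$ that touches it at $s=t$. The left difference quotient turns this into $\phi'(t)\le\phi(t)\,\mathbf u^\top B_t\mathbf u$. This envelope-type argument sidesteps the nonsmoothness of the norm at repeated top singular values.

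\textbf{What each buys.} The paper's route never has to locate differentiability points or singular vectors. It also produces directly the integrated inequality that is actually used in the proof of Theorem~1.

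Your route gives a sharper local statement. Combining the left quotient with the right one, which you already wrote down, gives the identity $\phi'(t)=\phi(t)\,\mathbf u^\top B_t\mathbf u$ at a.e.\ $t$, for any top singular vector $\mathbf u$. It also avoids differentiating $\int_0^t\lambda_{\max}(B_\tau)\,d\tau$. Finally, you justify two points the paper treats as standard:
\begin{itemize}
\item the absolute continuity of $t\mapsto\log\|Z_t\|$, via your Lipschitz argument plus a positive lower bound;
\item the applicability of Liouville's formula to an absolutely continuous solution with merely integrable $B$, via your reduction $Z_t=\Phi_tZ_0$ and uniqueness.
\end{itemize}

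As a minor remark, you could equally well use a top right singular vector $\mathbf v$ and the minorant $s\mapsto\|Z_s\mathbf v\|$. That is closer in spirit to the paper's vectors $Z_t\mathbf v$, and the computation is identical.
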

\vspace{2mm}
\begin{proof}
 By Liouville's formula,
\[
\det Z_t=\det Z_0\exp\!\left(\int_0^t \operatorname{tr}(B_\tau)\,d\tau\right)\neq 0,~~~t\in[0,T ].
\]
Hence $Z_t$ is invertible for every $t\in[0,T ]$, and therefore $\|Z_t \mathbf v\|>0$
for every $\mathbf v\neq 0$. Now fix a unit vector  $\mathbf v\in\mathbb R^n$. Since $Z$ is absolutely continuous, the map
$t\mapsto \|Z_t \mathbf v\|^2$ is absolutely continuous, and for a.e.\ $t$,
\begin{align*}
\frac{d}{dt}\|Z_t \mathbf v\|^2
&=
2\langle \dot Z_t \mathbf v,Z_t \mathbf v\rangle\\
&=
2\langle B_tZ_t \mathbf v,Z_t \mathbf v\rangle
\le
2\lambda_{\max}(B_t)\,\|Z_t \mathbf v\|^2,
\end{align*}
where the last inequality uses the symmetry of $B_t$: $\langle B_t \mathbf w,\mathbf w\rangle \le \lambda_{\max}(B_t)\|\mathbf w\|^2$ for any $\mathbf w$. Dividing by $2\|Z_t \mathbf v\|^2$ and using the chain rule yields
\[
\frac{d}{dt}\log\|Z_t \mathbf v\|
\le \lambda_{\max}(B_t)
~~\text{for a.e. }t\in[0,T ].
\]
Integrating from $s$ to $t$, $0\le s\le t\le T $, and exponentiating gives
\[
\|Z_t \mathbf v\|
\le
e^{\int_s^t \lambda_{\max}(B_\tau)\,d\tau}\,\|Z_s \mathbf v\|.
\]
Taking $\mathbf v^\star$ of unit norm to maximize $\|Z_t\mathbf v\|$,
\[
\|Z_t\|
\le
e^{\int_s^t \lambda_{\max}(B_\tau)\,d\tau}\,\|Z_s\mathbf v^\star\|\leq
e^{\int_s^t \lambda_{\max}(B_\tau)\,d\tau}\,\|Z_s\|.
\]
Taking logarithms, we obtain
\[
\log\|Z_t\|-\log\|Z_s\|
\le
\int_s^t \lambda_{\max}(B_\tau)\,d\tau.
\]
Since $t\mapsto \log\|Z_t\|$ is absolutely continuous, so is
\[
t\mapsto \log\|Z_t\|-\int_0^t \lambda_{\max}(B_\tau)\,d\tau.
\]
Moreover it is non-increasing, implying that
\begin{align*}
\frac{d}{dt}\log\|Z_t\|\le \lambda_{\max}(B_t)
~~\text{for a.e. }t\in[0,T ].
\end{align*}
This completes the proof.
\end{proof}
 
 \section{Proof of Theorem~\ref{maintheorem}}\label{sec:proof-main}
 
We are now in a position to prove the main result, drawing on the four lemmas established in Section~\ref{sec:background}.
 
\begin{proof}
   Take any $\Sigma_T \in\mathcal{R}({\Sigma_0,D,T })$, and a corresponding $A_t$ that satisfies \eqref{eq:control} and steers $\Sigma_0$ into $\Sigma_T$ via \eqref{eq:lyap}. Letting
   $X_0=\Sigma_0^{1/2}$
   and $X_t$ the solution of
\[
  \dot{X}_t = A_t X_t,
\]
it follows that
\begin{equation}\label{eq:SXX}
\Sigma_t=X_tX_t^\top
\end{equation}
and hence, that $X_{T}=\Sigma_T^{1/2}V$ for some orthogonal matrix $V$. Fix $k\in\{1,\dots,n\}$ and consider the induced dynamics on the $k$-th exterior
power $\wedge^k\mathbb{R}^n$. The multiplicative compound $\wedge^k\!X_t$ evolves under the additive compound $A_t^{[k]}$: by Lemma~\ref{lem:compound_ode},
\begin{equation}\label{eq:diffwedge}
  \frac{d}{dt}(\wedge ^k\! X_t) = A_t^{[k]}\,(\wedge ^k\! X_t), \qquad \wedge ^k\! X_0 = \wedge^k\!\Sigma_0^{1/2}.
\end{equation}
Since $A_t$ is symmetric, Lemma~\ref{lem:compound_eig} implies that the additive compound $A_t^{[k]}$ is also symmetric, with eigenvalues given by the $\binom{n}{k}$ sums
$\lambda_{i_1}(D)+\cdots+ \lambda_{i_k}(D)$ for $1\le i_1<\cdots<i_k\le n$.
Since $ \lambda_1(D)\ge\cdots\ge  \lambda_n(D)$, the largest eigenvalue is
\[
  \lambda_{\max}(A_t^{[k]}) = \sum_{i=1}^k \lambda_i(D).
\]
By Lemma~\ref{lem:compound_svd}, the operator norm of the multiplicative compound $\wedge^k\! X_t$ satisfies
$$\|\wedge^k\! X_t\| = \prod_{i=1}^k\sigma_i(X_t),$$ where
$\sigma_1(X_t)\ge\cdots\ge\sigma_n(X_t)$ are the singular values of $X_t$.
Applying Lemma~\ref{lem:norm_growth} to the compound ODE \eqref{eq:diffwedge}---noting that the coefficient $A_t^{[k]}$ is symmetric---we obtain
\begin{align*}
  \frac{d}{dt}\log\|\wedge^k\! X_t\|
  \le \lambda_{\max}(A_t^{[k]})
  = \sum_{i=1}^k \lambda_i(D),
\end{align*}
for a.e.\ $t\in[0,T]$ and $k\in\{1,\cdots,n\}$. Integrating from $0$ to $T$ yields
\begin{equation}\label{eq:partial_ineq}
  \sum_{i=1}^k\left(\log\sigma_i(X_T)-\log\sigma_i(X_0)\right) \le T\sum_{i=1}^k \lambda_i(D).
\end{equation}
Rearranging and using the fact that
$\sigma_i^2(X_t)= \lambda_i(\Sigma_t)$
from \eqref{eq:SXX}, we obtain
\begin{subequations}
\begin{equation}\label{eq:partial_ineq2}
  \sum_{i=1}^k\log\lambda_i(\Sigma_T)\le \sum_{i=1}^k \left(\log\lambda_i(\Sigma_0)+2T\lambda_i(D)\right),
\end{equation}
for $k\in\{1,\ldots,n\}$.
At $k=n$, Liouville's formula gives equality, namely
\begin{align}\nonumber
  \log\det \Sigma_T
  &=  \log\det \Sigma_0+2\int_0^{T}
  \tr(A_t)\,dt\\\label{eq:liouville}
  &= \log\det \Sigma_0+2T\tr(D) .
\end{align}
\end{subequations}
Combining (\ref{eq:partial_ineq2}--\ref{eq:liouville}) we obtain the claimed majorization \begin{align*}
       \log{\blambda_T}\prec \log{\blambda_0^\downarrow}+2T{\blambda_D^\downarrow},
    \end{align*}
    which completes the proof.
\end{proof}
 
\section{Time-reversal and isospectral controllability}\label{sec:time-reversal}
The reachable set $\mathcal{R}(\Sigma_0,D,T)$  defined in Problem \ref{prob1} enjoys a time-symmetry property. Namely, reaching $\Sigma_T$ from $\Sigma_0$ with eigenvalues specified in $D$ is equivalent to reaching $\Sigma_0$ from $\Sigma_T$ with $-D$. This allows us to obtain additional conditions for controllability by taking advantage of the results in Section \ref{isospcon}. This is explored next.
\begin{proposition}\label{prop12}
    Let $\Sigma_0,\Sigma_{T}>0$, $T>0$, and $D$ given, then the following are equivalent,
    \begin{subequations}
    \begin{align*}
    i) \;&\Sigma_{T}\in\mathcal{R}(\Sigma_0,D,T)\\
    ii)\; &\Sigma_0\in\mathcal{R}(\Sigma_{T},-D,T).
    \end{align*}
    \end{subequations}
\end{proposition}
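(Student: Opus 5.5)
The plan is a direct time-reversal construction. By symmetry it suffices to prove $i)\Rightarrow ii)$, since applying that implication to the data $(\Sigma_T,\Sigma_0,-D)$ gives the converse: $-(-D)=D$, and the roles of the endpoints swap back. One small bookkeeping point is that the eigenvalues of $-D$ listed in decreasing order are $-\lambda_n(D)\ge\cdots\ge-\lambda_1(D)$. This is harmless, because the constraint \eqref{eq:control} only asks that $A_t$ be orthogonally similar to $D$. So $U_t(-D)U_t^\top$ has the spectrum of $-D$, and we may diagonalize in any ordering, absorbing the reordering permutation into $U_t$.

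For $i)\Rightarrow ii)$, start from a measurable $U_\cdot$ with $A_t=U_tDU_t^\top$. Let $\Sigma_t$ be the corresponding absolutely continuous solution of \eqref{eq:lyap}, running from $\Sigma_0$ to $\Sigma_T$. Define the reversed quantities
\[
\tilde\Sigma_s:=\Sigma_{T-s},\qquad \tilde U_s:=U_{T-s},\qquad \tilde A_s:=\tilde U_s(-D)\tilde U_s^\top=-A_{T-s},\qquad s\in[0,T].
\]
Then $s\mapsto\tilde U_s$ is measurable and $O(n)$-valued, and $\tilde\Sigma$ is absolutely continuous. For a.e.\ $s$ we have
\[
\frac{d}{ds}\tilde\Sigma_s=-\dot\Sigma_{T-s}=-(A_{T-s}\Sigma_{T-s}+\Sigma_{T-s}A_{T-s})=\tilde A_s\tilde\Sigma_s+\tilde\Sigma_s\tilde A_s.
\]
So $\tilde\Sigma$ solves \eqref{eq:lyap} with an admissible isospectral control of spectrum $-D$. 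It starts at $\tilde\Sigma_0=\Sigma_T$ and ends at $\tilde\Sigma_T=\Sigma_0$. It also stays in $\mathbb S^n_{++}$ throughout, since it traverses the same path as $\Sigma$. Hence $\Sigma_0\in\mathcal R(\Sigma_T,-D,T)$.

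The only step needing any care is the solution concept. Because $U_t$ is merely measurable, \eqref{eq:lyap} is understood in the Carathéodory (a.e.) sense. Time reversal $t\mapsto T-s$ preserves absolute continuity and maps null sets to null sets, so the a.e.\ identity transfers. Uniqueness of solutions is not needed here: the reversed trajectory is exhibited explicitly, and it suffices that it has the required endpoints.

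One could alternatively argue through $X_t$ with $\dot X_t=A_tX_t$, using $\tilde X_s=X_{T-s}$. The direct covariance argument above is simpler, so I would use that.
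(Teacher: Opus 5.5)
Your proposal is correct and follows essentially the same proof as the paper. You prove one implication and obtain the converse by symmetry, using the time-reversed trajectory $\tilde\Sigma_s=\Sigma_{T-s}$ with control $\tilde A_s=-A_{T-s}=-U_{T-s}DU_{T-s}^\top$, which has spectrum $-D$ and steers $\Sigma_T$ to $\Sigma_0$. Your extra remarks are sound refinements: measurability, the Carath\'eodory solution concept, and the reordering of the eigenvalues of $-D$. You also get the chain-rule sign $\frac{d}{ds}\Sigma_{T-s}=-\dot\Sigma_{T-s}$ right, where the paper's display is slightly careless about it.
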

\begin{proof}
    It is enough to prove one implication since the reverse holds by the symmetry of the statement. Suppose $\Sigma_T\in\mathcal{R}(\Sigma_0,D,T)$, then by definition of $\mathcal{R}(\Sigma_0,D,T)$ there exists a control $A_t=U_tDU_t^\top$ taking $\Sigma_0$ to $\Sigma_{T}$ via
    \begin{align*}
       \dot\Sigma_t = A_t\Sigma_t+\Sigma_tA_t.
    \end{align*}
    The time-reversed trajectory $\tilde\Sigma_t:=\Sigma_{T-t}$ satisfies 
    \begin{align*}
        \dot{\tilde{\Sigma}}_t = \dot{\Sigma}_{T-t}&=-A_{T-t}\Sigma_{T-t}-\Sigma_{T-t}A_{T-t}\\
        &=\tilde{A}_t\tilde{\Sigma}_t+\tilde{\Sigma}_t\tilde{A}_t,
    \end{align*}
    where $\tilde{A}_t:=-A_{T-t}$. Thus we have found a new control $\tilde{A}_t=-U_{T-t}DU_{T-t}^\top$ with spectrum in $-D$ that takes $\Sigma_{T}$ to $\Sigma_0$ in $T$ units of time. This means that $\Sigma_0\in\mathcal{R}(\Sigma_{T},-D,T)$, which completes the proof.
\end{proof}

This observation yields an additional necessary condition besides the one dictated by Theorem \ref{maintheorem}. We include them both in the statement below.
\begin{corollary}\label{fig:corol}
     If $\Sigma_{T}\in\mathcal{R}(\Sigma_0,D,T)$, then it holds that
    \begin{align*}
       &\log{\blambda_T}\prec \log{\blambda_0^\downarrow}+2T{\blambda_D^\downarrow}, \mbox{ and}\\
       &\log{\blambda_0}\prec \log{\blambda_T^\downarrow}+2T{\blambda_{-D}^\downarrow},
    \end{align*}
    with  $\log{\blambda_{T }}$, $\log{\blambda_0}$, and ${\blambda_D}$ be the vectors of eigenvalues of $\log\Sigma_{T}$, $\log\Sigma_0$, and $D$, respectively. 
\end{corollary}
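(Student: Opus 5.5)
The first majorization is exactly the conclusion of Theorem~\ref{maintheorem}, so only the second needs an argument. The plan is to obtain it by applying Theorem~\ref{maintheorem} to the time-reversed problem furnished by Proposition~\ref{prop12}.

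First, assume $\Sigma_T\in\mathcal{R}(\Sigma_0,D,T)$. Proposition~\ref{prop12} then gives $\Sigma_0\in\mathcal{R}(\Sigma_T,-D,T)$. In this reversed problem:
\begin{itemize}
\item the initial covariance is $\Sigma_T$,
\item the terminal covariance is $\Sigma_0$,
\item the spectral matrix is $-D$.
\end{itemize}
The matrix $-D$ is diagonal, and after relabeling by the permutation that sorts its entries, it is of the form required in \eqref{eq:control}. Indeed, if $\Pi$ is the reversal permutation, then $-D=\Pi(\Pi^\top(-D)\Pi)\Pi^\top$, and $\Pi^\top(-D)\Pi$ has diagonal entries $-\lambda_n(D)\ge\cdots\ge-\lambda_1(D)$. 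The permutation can be absorbed into the orthogonal factor, replacing $U_t$ by $U_t\Pi$. So the reversed control $\tilde A_t=-U_{T-t}DU_{T-t}^\top$ is an admissible isospectral control with spectrum $\blambda_{-D}$.

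Second, apply Theorem~\ref{maintheorem} to this reversed instance, with $\Sigma_T$ in the role of the initial covariance, $\Sigma_0$ in the role of the terminal one, and $-D$ in place of $D$. This yields
\[
\log\blambda_0\prec\log\blambda_T^\downarrow+2T\blambda_{-D}^\downarrow .
\]
Here $\blambda_{-D}^\downarrow=(-\lambda_n(D),\ldots,-\lambda_1(D))$. As a consistency check, the equality of total sums in this majorization agrees with Liouville's formula \eqref{eq:liouville} read backwards: $\log\det\Sigma_0=\log\det\Sigma_T-2T\tr D$.

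The only point needing care is the admissibility of $-D$ in the statement of Theorem~\ref{maintheorem}, i.e.\ the ordering convention. In its proof, the theorem only used that $\lambda_{\max}(A_t^{[k]})$ is the sum of the $k$ largest eigenvalues of $A_t$, which depends only on the spectrum. So sorting $-D$ in decreasing order, as done above, suffices, and no other obstacle is expected.
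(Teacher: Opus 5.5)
Your proposal is correct and matches the paper's proof: the first majorization is Theorem~\ref{maintheorem} itself, and the second follows by applying Theorem~\ref{maintheorem} to the time-reversed instance $\Sigma_0\in\mathcal{R}(\Sigma_T,-D,T)$ supplied by Proposition~\ref{prop12}. Your extra check that $-D$ can be re-sorted, by absorbing the reversal permutation into $U_t$, is a detail the paper leaves implicit, and it is handled correctly.
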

\begin{proof}
    Suppose $\Sigma_{T}\in\mathcal{R}(\Sigma_0,D,T)$. By Theorem \ref{maintheorem} it holds that 
    $$\log{\blambda_T}\prec \log{\blambda_0^\downarrow}+2T{\blambda_D^\downarrow}.$$
    By Proposition \ref{prop12}, $\Sigma_0\in\mathcal{R}(\Sigma_{T},-D,T)$ so applying Theorem \ref{maintheorem} it must also hold that 
     $$ \log{\blambda_0}\prec \log{\blambda_T^\downarrow}+2T{\blambda_{-D}^\downarrow}.$$ This completes the proof.
\end{proof}

Clearly, the characterization of $\mathcal{R}(cI,T,D)$ in Theorem \ref{thm:2} for the case where $\Sigma_0=cI$ admits a time-symmetric counterpart, that is,  if $\Sigma_{T}=cI$ for $c>0$, then 
    \begin{align*}
    \mathcal{R}(\Sigma_{T},T,-D)=\{\Sigma_0>0~:~\log\blambda_{\Sigma_0}\prec\log\blambda^{\downarrow}_{\Sigma_T}+2T\blambda^{\downarrow}_{-D}\}.
    \end{align*}
    We summarize in Figure \ref{Fig:1} the general relationship between eigenvalue majorization and isospectral controllability in the general case (black) and the special cases when either the initial or terminal covariances are multiples of the identity (red and blue). This captures all the results we have presented so far and serves as a checkpoint for the reader. 

    Note that in the general case where $\Sigma_0$, $\Sigma_{T}$ are arbitrary positive definite covariances, the necessary conditions  outlined in Corollary \ref{fig:corol} need not be sufficient. A trivial example can illustrate this. Suppose $D=0$ and the endpoint covariances are diagonal, disinct, and correspond to permutations of each other. Since $D=0$, the necessary conditions of Corollary \ref{fig:corol} then read 
    \begin{align*}
        \log\blambda_0\prec \log\blambda_{T},~~\log\blambda_{T}\prec \log\blambda_0.
    \end{align*}The above obviously hold since permuted vectors automatically majorize each other (by definition, majorization sorts the vectors before comparing them). Yet the terminal covariance  $\Sigma_{T}$ is clearly outside the reachable set $\mathcal{R}(\Sigma_0,T,0)=\{\Sigma_0\}$. 
    
\begin{figure}\label{Fig:1}
    \centering
    \fbox{\includegraphics[width=.97\linewidth]{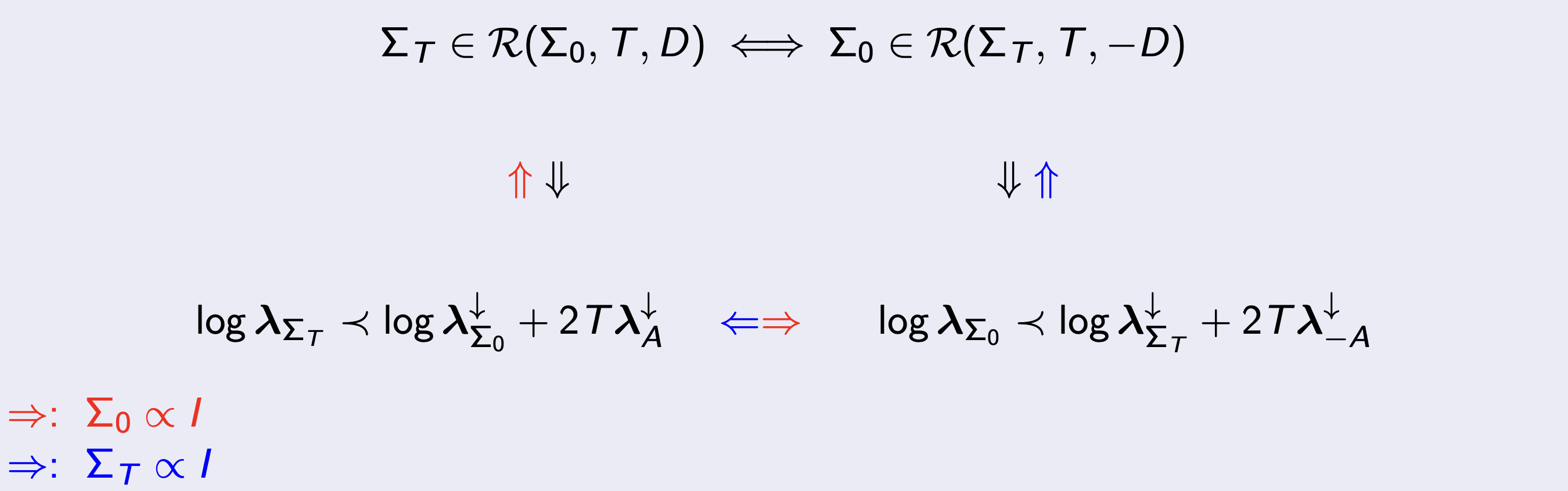}}
    \caption{Relationship between eigenvalue majorization and isospectral controllability in general (black) and in the special cases when either the initial or terminal covariances are multiples of the identity, in red and blue, respectively.}
\end{figure}
We now exploit the time-symmetry properties of the problem to derive a sufficient condition for controllability when the time allows passing through a multiple of the identity.
\begin{theorem}\label{theoremsuff}
    Let $T>0$, $\Sigma_0,\Sigma_{T}>0$. If there exists $c>0$ and $t\in[0,T]$ such that 
    \begin{align*}
    \log(\blambda_T/c)&\prec 2(T-t)\blambda_{D},\\
    \log(\blambda_0/c)&\prec 2t\blambda_{-D},
    \end{align*}
    then $\Sigma_T\in\mathcal{R}(\Sigma_0,T,D)$.
\end{theorem}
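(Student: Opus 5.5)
The plan is to concatenate two controls that meet at the intermediate covariance $cI$. On $[0,t]$ we steer $\Sigma_0$ to $cI$, and on $[t,T]$ we steer $cI$ to $\Sigma_T$. Each leg will be supplied by Theorem~\ref{thm:2}, directly or after the time reversal of Proposition~\ref{prop12}. Since the Lyapunov equation \eqref{eq:lyap} is time-invariant in form, reachable sets compose under concatenation of measurable controls. Any piecewise definition of $U_s$ is still measurable and keeps the spectrum of $A_s$ equal to that of $D$, so the concatenated control is admissible.

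\emph{Second leg.} Note that $\log(\blambda_T/c)=\log\blambda_T-(\log c)\mathbf 1$. The hypothesis $\log(\blambda_T/c)\prec 2(T-t)\blambda_D$ is therefore equivalent to $\log\blambda_T\prec(\log c)\mathbf 1+2(T-t)\blambda_D$, because adding a constant vector preserves majorization. Theorem~\ref{thm:2}, applied with horizon $T-t$, then gives $\Sigma_T\in\mathcal R(cI,D,T-t)$. We shift that control in time to the interval $[t,T]$.

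\emph{First leg.} In the same way, $\log(\blambda_0/c)\prec 2t\blambda_{-D}$ is equivalent to $\log\blambda_0\prec(\log c)\mathbf 1+2t\blambda_{-D}$. Theorem~\ref{thm:2}, applied with $-D$ in place of $D$ and horizon $t$, gives $\Sigma_0\in\mathcal R(cI,-D,t)$. Proposition~\ref{prop12} then yields $cI\in\mathcal R(\Sigma_0,D,t)$, and the associated control is $\tilde A_s=-A_{t-s}$, which has spectrum equal to that of $D$.

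It remains to concatenate the two controls and to treat the degenerate endpoints. If $t=0$, the second hypothesis reads $\log(\blambda_0/c)\prec 0$, which forces $\Sigma_0=cI$, so only the second leg is needed. The case $t=T$ is symmetric. Theorem~\ref{thm:2} is stated for positive horizons, but the majorization condition with zero horizon is handled directly in this way.

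The main obstacle is bookkeeping rather than substance. Theorem~\ref{thm:2} is written with $\blambda_D$ unsorted, while the hypothesis here pairs with $\blambda_{-D}$. Majorization is permutation invariant, so this causes no difficulty, but it must be checked. The other point to check is that the time-reversed control keeps the prescribed spectrum of $D$ rather than that of $-D$, which holds since $-(-D)=D$.
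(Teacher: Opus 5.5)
Your proposal is correct and follows the paper's proof. The paper reaches $cI$ from $\Sigma_0$ in time $t$ by applying Theorem~\ref{thm:2} with $-D$ and reversing time via Proposition~\ref{prop12}, then reaches $\Sigma_T$ from $cI$ in time $T-t$ by Theorem~\ref{thm:2} with $D$, and concatenates the two controls. Your extra checks supply details the paper leaves implicit: majorization is preserved when $(\log c)\mathbf 1$ is added, the concatenated control is admissible, and at $t\in\{0,T\}$ majorization by the zero vector forces $\Sigma_0=cI$ or $\Sigma_T=cI$.
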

\begin{proof}
    From Theorem \ref{thm:2} and the time-symmetry \ref{prop12}, the stated conditions imply that we can reach $cI$ from $\Sigma_0$ in $t$ units of time using $D$, followed by reaching $\Sigma_T$ from $cI$ in $T-t$ units of time using once more $D$, i.e., we have the sequence of steps
    \begin{align*}
        \overbrace{\Sigma_0\xrightarrow[D, t] \, cI \xrightarrow[D,T-t]\,\Sigma_T}^{D,T},
    \end{align*}
    which shows that $\Sigma_T$ is reachable, as claimed.
\end{proof}

Finally, in the theorem below, we note that provided the prescribed spectrum of the traceless $A_t$ is nontrivial (i.e., $A_t$ is not the zero matrix) and assuming a sufficiently long time interval, it is always possible to steer between any two covariances $\Sigma_0,\Sigma_T$ of the same determinant. 

\begin{theorem}[finite-time controllability]
Given $D\neq 0$ diagonal and traceless, $\Sigma_1>0,\Sigma_2>0$ with $\det(\Sigma_1)=\det(\Sigma_2)$, there exists a $T>0$ such that 
 $\Sigma_2 = \mathcal{R}(\Sigma_1,T,D)$.
\end{theorem}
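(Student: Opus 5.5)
The plan is to use Theorem~\ref{theoremsuff} and pass through a multiple of the identity. Take $c:=\det(\Sigma_1)^{1/n}=\det(\Sigma_2)^{1/n}$. Then the vectors $\mathbf x:=\log(\blambda_{\Sigma_1}/c)$ and $\mathbf y:=\log(\blambda_{\Sigma_2}/c)$ both have zero entry-sum. The traceless $D$ has $\blambda_D$ and $\blambda_{-D}$ with zero sum as well. It then suffices to find times $t_1,t_2>0$ such that
\[
\mathbf x\prec 2t_1\blambda_{-D},\qquad \mathbf y\prec 2t_2\blambda_{D},
\]
and to set $T=t_1+t_2$, $t=t_1$ in Theorem~\ref{theoremsuff}. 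This gives the sequence $\Sigma_1\to cI\to\Sigma_2$, where the first leg is justified by Theorem~\ref{thm:2} together with Proposition~\ref{prop12}. The statement's ``$=$'' is read as ``$\in$''.

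The key step is a scaling lemma. Let $\mathbf d$ be a nonzero vector with zero sum and $\mathbf z$ any vector with zero sum. Then $\mathbf z\prec s\mathbf d$ for all sufficiently large $s>0$. To prove it, set $g_k(\mathbf d):=\sum_{i=1}^k d_i^\downarrow$ for $k=1,\dots,n-1$, and show $g_k(\mathbf d)>0$. Since the sum is zero and $\mathbf d\neq 0$, we have $d^\downarrow_1>0>d^\downarrow_n$. The partial sums of the sorted vector form a concave sequence in $k$ (the increments are decreasing), starting at $0$ for $k=0$ and ending at $0$ for $k=n$, and they are not identically zero. Hence $g_k(\mathbf d)>0$ strictly for $0<k<n$. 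Because $g_k(s\mathbf d)=s\,g_k(\mathbf d)$ for $s>0$, choosing
\[
s\ge\max_k \frac{\sum_{i\le k}z_i^\downarrow}{g_k(\mathbf d)}
\]
gives the partial-sum inequalities, and the total sums agree since both are zero. Apply the lemma with $\mathbf d=\blambda_{-D}$, $\mathbf z=\mathbf x$ and with $\mathbf d=\blambda_D$, $\mathbf z=\mathbf y$. This yields $t_1,t_2$, and $T=t_1+t_2$ finishes the argument.

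The main obstacle is mostly bookkeeping. One must check that the majorizations in Theorem~\ref{theoremsuff} match the orientations used in Theorem~\ref{thm:2} and Proposition~\ref{prop12}, namely that reaching $cI$ from $\Sigma_1$ with $D$ in time $t_1$ is equivalent to $\log(\blambda_{\Sigma_1}/c)\prec 2t_1\blambda_{-D}$. Two further details need care. First, Theorem~\ref{thm:2} is stated without sorting on the right-hand side; this is harmless because majorization is permutation invariant. Second, the requirement $\det\Sigma_1=\det\Sigma_2$ is exactly what makes a single $c$ work for both legs, since each leg preserves the determinant when $\tr D=0$.
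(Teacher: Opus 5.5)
Your proposal is correct and follows the paper's proof: both route $\Sigma_1\to cI\to\Sigma_2$ through Theorem~\ref{theoremsuff}, and both rely on the sorted partial sums of a nonzero traceless $\blambda_D$ (and of $\blambda_{-D}$) being strictly positive for $k<n$, so the majorizations hold once the times are large enough (the paper takes $t=T/2$, whereas you choose $t_1,t_2$ separately). Your version is in fact slightly more careful than the paper's: you fix $c=\det(\Sigma_1)^{1/n}$ explicitly and prove the strict positivity by the concavity argument, while the paper asserts positivity without proof and says ``for any $c>0$'', even though the equal-sum requirement of majorization forces exactly your choice of $c$.
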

\begin{proof}
    Since $D$ is traceless and nonzero, 
    \begin{align*}
\sum_{i=1}^k\lambda_i({{D}^{\downarrow}})>0,\mbox{ for }k\in\{1,2,\ldots,n-1\}.
    \end{align*}
    (Note that for $k=n$ the sum vanishes since $D$ is traceless.) Thus for any $c>0$ and for a large enough $T$, taking $t=T/2$, the majorization conditions
    \begin{align*}
    \log(\blambda_{\Sigma_2}/c)&\prec 2(T-t)\blambda_{D}=T\blambda_{D},\\
    \log(\blambda_{\Sigma_1}/c)&\prec 2t\blambda_{-D}=T\blambda_{-D},
    \end{align*}
    must hold.
    By Theorem \ref{theoremsuff}, $\Sigma_2\in\mathcal{R}(\Sigma_1,T,D)$, as claimed.
\end{proof}

\begin{remark}
    The conclusion of the above theorem is rather surprising
since finite-time controllability of the {\em bilinear dynamics} in \eqref{eq:lyap} is ascertained from static linear-algebraic conditions, forgoing any geometric control arguments.
\end{remark}

\begin{figure*}[h]
    \centering
    \includegraphics[width=\linewidth]{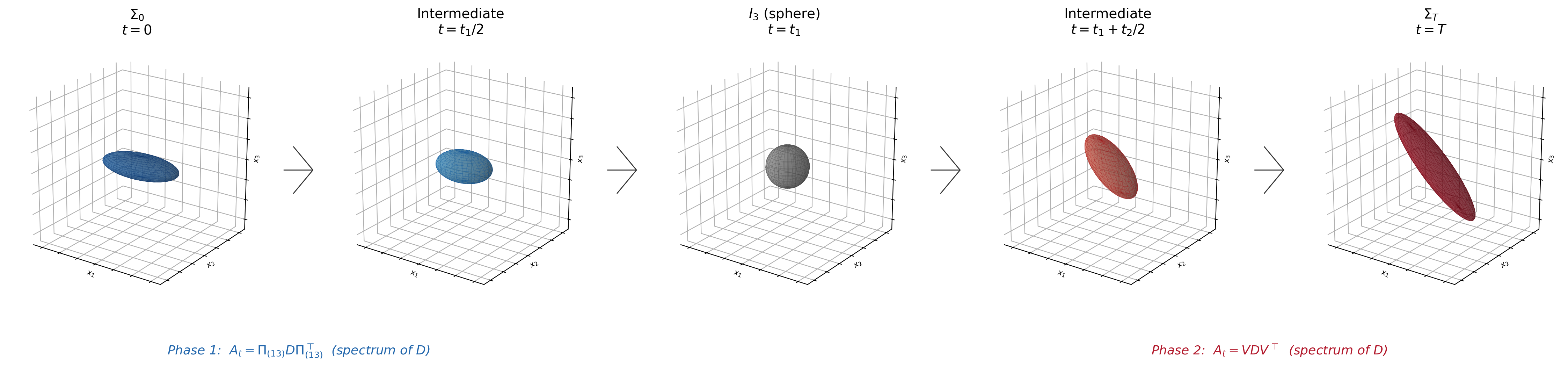}
    \caption{Isospectral steering of a $3\times 3$ covariance from $\Sigma_0$ (elongated along $x_1$) to $\Sigma_T$ (elongated at $45^\circ$ in the $(x_1,x_3)$-plane), passing through the identity $I_3$ (sphere) at $t=t_1$. The control $A_t$ has a fixed spectrum $\{2,0,-2\}$ throughout; only its eigenvectors are modulated. The blue-to-gray transition (Phase~1) contracts the ellipsoid to a sphere, and the gray-to-red transition (Phase~2) re-inflates it into the target shape. Total time $T=1$.}
    \label{fig:ellipsoids}
\end{figure*}

\section{Numerical Example}\label{sec:numerical}
 
We illustrate the constructive nature of the finite-time controllability result with a concrete three-dimensional example. The proof of Theorem~19 is implicitly constructive: it steers $\Sigma_0$ to $\Sigma_T$ by first contracting to the identity (a ``ball'') and then expanding into the target---much as a child might reshape a lump of modeling clay by first rolling it into a sphere and then molding it into a new form.
 
\subsection{Setup}\label{sec:num_setup}
 
Let $n=3$ and fix the traceless diagonal
\[
D=\diag(2,\,0,\,-2),
\]
so that the isospectral control set consists of all symmetric matrices with eigenvalues $\{2,0,-2\}$. We steer between
\begin{align*}
\Sigma_0&=\diag(4,\,1,\,\tfrac14),\\
\Sigma_T&=V\,\diag(9,\,1,\,\tfrac19)\,V^\top,
\end{align*}
where $V$ is a rotation by $\pi/4$ about the $x_2$-axis,
\[
V=\begin{pmatrix}
\tfrac{1}{\sqrt2}&0&\tfrac{1}{\sqrt2}\\[3pt]
0&1&0\\[3pt]
-\tfrac{1}{\sqrt2}&0&\tfrac{1}{\sqrt2}
\end{pmatrix}\in SO(3).
\]
Both covariances have unit determinant. The initial ellipsoid is elongated along $x_1$, while the target is elongated along a direction rotated by $45^\circ$ in the $(x_1,x_3)$-plane.
 
\subsection{Verification of the majorization conditions}
 
We pass through $cI_3$ with $c=1$ (the identity), taking $t_1+t_2=T$. The two majorization conditions (cf.\ Theorem~\ref{theoremsuff}) are:
\begin{align*}
&\text{Phase 1: }\log\blambda(\Sigma_0)\prec 2t_1\blambda_{-D}^\downarrow,\\
&\text{Phase 2: }\log\blambda(\Sigma_T)\prec 2t_2\blambda_D^\downarrow.
\end{align*}
Since $D=\diag(2,0,-2)$, both $\blambda_D^\downarrow=(2,0,-2)$ and $\blambda_{-D}^\downarrow=(2,0,-2)$ (the sorted spectra coincide due to the symmetric placement of the eigenvalues about zero). The eigenvalue vectors are
\begin{align*}
\log\blambda(\Sigma_0)&=(\log 4,\,0,\,-\log 4)\approx(1.386,\,0,\,-1.386),\\
\log\blambda(\Sigma_T)&=(\log 9,\,0,\,-\log 9)\approx(2.197,\,0,\,-2.197).
\end{align*}
The binding majorization constraint (at $k=1$) gives
\begin{align*}
&\text{Phase 1: }\log 4\leq 4t_1\;\Longrightarrow\;t_1\geq \tfrac{\log 4}{4}\approx 0.347,\\
&\text{Phase 2: }\log 9\leq 4t_2\;\Longrightarrow\;t_2\geq \tfrac{\log 9}{4}\approx 0.549.
\end{align*}
Hence the minimum total time is
\[
T_{\min}=\frac{\log 4}{4}+\frac{\log 9}{4}=\frac{\log 36}{4}\approx 0.896.
\]
We select $T=1$, with $t_1=\frac{\log 4}{4}\approx 0.347$ and $t_2=T-t_1\approx 0.653$.
 
\subsection{Constructive control}
 
Following the proof of Theorem~\ref{thm:2}, the control in each phase is a piecewise-constant \emph{permuted} copy of $D$, rotated into the eigenbasis of the target covariance.
 
\medskip
\noindent{\em Phase 1 ($\Sigma_0\to I_3$, duration $t_1$).}
Since $\Sigma_0$ is diagonal with eigenvalues $(4,1,\frac14)$ arranged so that the largest eigenvalue must \emph{shrink} and the smallest must \emph{grow}, the appropriate generator is the permuted copy
\[
A_t^{(1)}=\Pi_{(13)}\,D\,\Pi_{(13)}^\top=\diag(-2,\,0,\,2),\quad t\in[0,t_1],
\]
where $\Pi_{(13)}$ is the permutation matrix that swaps entries $1$ and $3$. This has the same spectrum as $D$ and drives the Lyapunov equation, which amounts to three scalar equations
\[
\frac{d}{dt}\Sigma_{ii}=2(A^{(1)})_{ii}\Sigma_{ii},
\]
for $i\in\{1,2,3\}$,
yielding
\begin{align*}
\Sigma_{11}(t)&=4e^{-4t},\quad
\Sigma_{22}(t)=1,\quad
\Sigma_{33}(t)=\tfrac14 e^{4t}.
\end{align*}
At $t=t_1=\frac{\log 4}{4}$: $\Sigma_{11}=4\cdot\frac14=1$, $\Sigma_{33}=\frac14\cdot 4=1$, confirming $\Sigma_{t_1}=I_3$.
 
\medskip
\noindent{\em Phase 2 ($I_3\to\Sigma_T$, duration $t_2$).}
Starting from $I_3$, we apply
\[
A_t^{(2)}=V\,D\,V^\top,\quad t\in[t_1,T].
\]
In $V$'s eigenbasis, the dynamics is diagonal with rates $(2,0,-2)$, giving eigenvalues
\[
\lambda_i(\Sigma_{t_1+s})=e^{2d_is},\quad s\in[0,t_2],
\]
where $(d_1,d_2,d_3)=(2,0,-2)$. At $s=t_2\approx 0.653$: $e^{4t_2}=e^{\log 9+4(t_2-\log 9/4)}$. Since $t_2>\frac{\log 9}{4}$, the eigenvalues at the terminal time \emph{exceed} $(9,1,\frac19)$. To achieve the exact target, we use a brief corrective sub-phase: apply the swapped generator $V\Pi_{(13)}D\Pi_{(13)}^\top V^\top$ for the remaining time, analogous to Phase~1. The Birkhoff decomposition gives the precise time allocation between the two permuted generators; at the minimum time $t_2=\frac{\log 9}{4}$, only the identity permutation is needed and the control is simply $A_t^{(2)}=VDV^\top$.
 
\subsection{Snapshots of the covariance ellipsoid}
 
Figure~\ref{fig:ellipsoids} shows the unit-level sets $\{\mathbf x:\mathbf x^\top\Sigma_t^{-1}\mathbf x=1\}$ at five instants:
\begin{enumerate}
\item $t=0$: $\Sigma_0=\diag(4,1,\frac14)$, ellipsoid elongated along $x_1$.
\item $t=t_1/2$: $\Sigma=\diag(2,1,\frac12)$, ellipsoid contracted.
\item $t=t_1$: $\Sigma=I_3$, a sphere.
\item $t=t_1+t_2/2$:  $\Sigma=V\diag(3,1,\frac13)V^\top$, elongated in a rotated direction.
\item $t=T$: target $\Sigma_T=V\diag(9,1,\frac19)V^\top$, fully elongated at $45^\circ$.
\end{enumerate}
Throughout both phases, the spectrum of the control $A_t$ remains fixed at $\{2,0,-2\}$; only the eigenvectors change. The determinant remains $\det(\Sigma_t)=1$ at every instant, as guaranteed by the tracelessness of $D$.

\section{Conclusion}\label{sec:conclusion}

We introduced the problem of isospectral steering within the frame of covariance control. 
The problem amounts to the controllability of the differential Lyapunov equation with a time-varying gain that is constrained to have a fixed, prescribed spectrum. 
The constraint on the spectrum arises naturally from optimality conditions when minimizing shear deformation or, more generally, functionals that penalize the spectral spread of the control gain.

Our main contributions are as follows. First, we showed that isospectrality of the control law is not merely a convenient assumption but a consequence of optimality for a broad class of cost functionals of the form $\int_0^T \tr(f(A_t))\,dt$, provided $f'$ is injective on the spectrum of $A_t$ (Section~\ref{sec:shear}). Second, we established that log-majorization of the eigenvalues of the terminal covariance is a necessary condition for reachability under isospectral control in arbitrary dimensions (Theorem~\ref{maintheorem}), leveraging tools from multilinear algebra---specifically, the exterior power functor and compound matrices. Third, we proved that when the initial covariance is a scalar multiple of the identity, this majorization condition is also sufficient (Theorem~\ref{thm:2}), with the proof relying on the Birkhoff--von Neumann theorem for doubly stochastic matrices and the commutativity of permuted diagonal controls. Fourth, by exploiting a time-reversal symmetry of the Lyapunov equation (Proposition~\ref{prop12}), we derived additional necessary conditions (Corollary~\ref{fig:corol}) as well as sufficient conditions for reachability that allow passage through an isotropic intermediate state (Theorem~\ref{theoremsuff}). Finally, we showed that for any nontrivial traceless spectrum, finite-time controllability between arbitrary equi-determinant covariances is always achievable given a sufficiently long time horizon; the argument we provided is constructive and relies on the Birkhoff decomposition of bi-stochastic matrices.

Several directions remain open. A complete characterization of the reachable set $\mathcal{R}(\Sigma_0,D,T)$ for general (non-isotropic) initial covariances $\Sigma_0$ is not yet available; the gap between the necessary majorization conditions and the sufficient conditions presented here warrants further investigation. In addition, the inverse problem proposed in Remark \ref{rem:kalman}---determining which spectral signatures $D$ arise as optimal for a given pair $(\Sigma_0,\Sigma_T)$ and a suitable functional $f$---connects to classical questions in the spirit of Kalman and remains unresolved. Finally, the geometric structure of the reachable set for the full state-transition dynamics $\dot{X}_t = A_t X_t$, as opposed to the Lyapunov equation, involves a richer sub-Riemannian geometry \cite{abdelgalil2025holonomy} on $SL(n,\mathbb{R})$ whose explicit characterization in general is also an open problem.

The approach developed here suggests a useful lens for tackling the non-convex optimization problems that originally motivated our study \cite{sabbagh2025minimizing,Sabbagh26}. Rather than directly solving for the optimal control $\{A_t\}$ that minimizes shear deformations---a problem that is non-convex and may admit multiple solutions---one may instead first characterize the reachable set under the isospectrality constraint that optimality imposes, and then optimize over this geometrically structured set. In effect, the non-convexity of the original problem is absorbed into the geometry of the reachable set, which, as we have shown, is governed by the convex notion of majorization. This ``inversion'' of perspective---deriving the constraint set before optimizing---may provide a useful insight into the structure of the problem.

\bibliographystyle{unsrt}
\bibliography{refs}

\end{document}